\pgfplotsset{compat=1.15}
\definecolor{ududff}{rgb}{0.30196078431372547,0.30196078431372547,1.}
\patchcmd{\thebibliography}{\leftmargin\labelwidth}{\leftmargin\labelwidth\addtolength\itemsep{-0.1\baselineskip}}{}{}
\title{$4K_1$ free graphs on 13 vertices have cop number at most 2}
\date{}
\author{Zhaoyu Wu \thanks{
  School of Mathematics and Statistics, Northeastern University at Qinhuangdao, \texttt{\href{mailto:wuzhaoyuwushidi@163.com}{wuzhaoyuwushidi@163.com}}
}}
\newtheorem{theorem}{Theorem}
\newtheorem{lemma}[theorem]{Lemma}
\newtheorem{conjecture}[theorem]{Conjecture}
\begin{document}

\maketitle

\begin{abstract}
    The game of cops and robber has been studied for many years. Denoting $\mathsf{Forb}(4K_1)$ to be the family of all graphs that contain no induced subgraph isomorphic to $4K_1$ (e.g., with independence number less than $4$), we prove that for any $G\in\mathsf{Forb}(4K_1)$, we have $c(G)\leq 2$, where $c(\cdot)$ is the cop number. This improves a lower bound of a question proposed by Char et al. in a recent paper (arxiv, 2025), that any counterexample of a conjecture raised by Turcotte (2022) when $p=4$ must have at least 14 vertices.
\end{abstract}

\section{Basic Notations}
Throughout this paper, we consider only finite, simple graphs with undirected edges. For a positive integer $n$, let $\mathsf{Graph}(n)$ denote the family of all simple graphs on $n$ vertices.

For a graph $G$, let $\delta(G)$ and $\Delta(G)$ denote its minimum and maximum degree, respectively. We denote the domination number of $G$ by $\gamma(G)$, and denote the independence number of $G$ by $\alpha(G)$. For a vertex $v \in V(G)$, we write $d_G(v)$ for its degree in $G$, and $N_G(v)$ for its neighborhood. We define $N_G[v]$ as $N_G(v) \cup \{v\}$. We omit the subscript $G$ when the graph is clear from context. If we only consider a graph $G$ and a subgraph $H$ of $G$, we use $d(\cdot), N(\cdot), N[\cdot]$ for $G$, and $d_H(\cdot), N_H(\cdot), N_H[\cdot]$ for $H$. For a graph $G$ and a vertex set $X \subseteq V(G)$, we denote by $G - X$ the induced subgraph $G[V(G) \setminus X]$; if $X = \{v\}$, we simply write $G - v$.

For $S\subseteq V(G)$, denote by $E(S)$ the set of edges with vertices in $S$; for $S_1,S_2\subseteq V(G)$, denote by $E(S_1,S_2)$ the set of edges with vertices in $S$.

For $\ell \geq 1$, let $P_\ell$ and $K_\ell$ denote the path graph and the complete graph on $\ell$ vertices, respectively. For $\ell \geq 3$, let $C_\ell$ denote the cycle graph on $\ell$ vertices. The disjoint union of two graphs $G_1$ and $G_2$ is denoted by $G_1 + G_2$. For a graph $G$ and a positive integer $\ell$, we write $\ell G$ for the disjoint union of $\ell$ copies of $G$.

For a graph $F$, we denote by $\mathsf{Forb}(F)$ the family of graphs that contain no induced subgraph isomorphic to $F$. 






\section{Introduction}

The game of cops and robber has been a topic of study in graph theory for many years. The earliest papers on this topic are by Quilliot \cite{quilliot1978jeux} and by Nowakowski and Winkler \cite{nowakowski1983vertex}. The game is played on a connected graph $G$ by two players with the following rules:

\begin{itemize}
    \item In the first round, the first player (the cops) places $p$ cops $C_1, C_2, \dots, C_p$ on vertices of $G$. Then, the second player (the robber) places a single robber $R$ on a vertex of $G$.
    \item In each subsequent round, the first player moves first. The player decides the action for each cop $C_i$: each may either remain on its current vertex or move to an adjacent vertex. If any cop occupies the same vertex as the robber $R$ at the end of this move, the game ends and the first player wins (i.e., the cops win).
    \item Then, the second player takes a turn and decides the action for the robber $R$. Similarly, the robber may either stay or move to an adjacent vertex. The second player wins (i.e., the robber wins) if they can indefinitely prevent the cops from capturing the robber.
\end{itemize}

For a connected graph $G$, we define its cop number, denoted by $c(G)$ (this notation is due to Aigner and Fromme \cite{aigner1984game}), to be the minimum number $p$ of cops such that the cops have a winning strategy on $G$. If $G$ is disconnected, let $G_1, G_2, \dots, G_k$ be its connected components. Then, we define
\[
c(G) := \max_{1 \le i \le k} c(G_i).
\]
While this definition may seem unintuitive at first, it proves convenient in practice. For a family $\mathcal{F}$ of graphs, we define
\[
c(\mathcal{F}) = \max_{G \in \mathcal{F}} c(G).
\]

This paper focuses on upper bounds for the cop numbers of graph families. Several key results in this direction are known:

\begin{theorem}\emph{(\cite{joret2010cops})}
    For a single graph $F$, $c(\mathsf{Forb}(F))$ is finite if and only if $F$ is a finite disjoint union of paths.
\end{theorem}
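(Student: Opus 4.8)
This is an equivalence, so the plan is to prove the two implications separately: unboundedness when $F$ is not a linear forest, and a uniform bound when it is. For the unboundedness direction, note that if $F$ is not a disjoint union of paths then some connected component of $F$ is not a path, hence either contains a cycle or is a tree with a vertex of degree at least $3$. In both cases I would build, for every $t$, a graph in $\mathsf{Forb}(F)$ of cop number exceeding $t$, using the classical fact (Aigner--Fromme) that a graph of girth at least $5$ and minimum degree $\delta$ has cop number at least $\delta$, together with the Erd\H{o}s--Sachs-type existence of graphs of arbitrarily large girth and minimum degree. In the cycle case this is immediate: any graph of girth larger than $\lvert V(F)\rvert$ contains no cycle of length at most $\lvert V(F)\rvert$ and hence no copy of $F$ even as an ordinary subgraph, so choosing the minimum degree at least $t+1$ gives a graph in $\mathsf{Forb}(F)$ with cop number greater than $t$.

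The branch-vertex case is more delicate. If $F$ is a forest with a vertex $x$ of degree at least $3$, its three neighbours are pairwise non-adjacent (a forest has no cycle), so $\{x\}$ together with them induces a $K_{1,3}$; thus every claw-free graph belongs to $\mathsf{Forb}(F)$, and it suffices to exhibit claw-free graphs of unbounded cop number. The previous high-girth recipe cannot be reused verbatim, since girth at least $5$ forces induced claws. Instead I would aim for graphs in which every neighbourhood has independence number at most $2$ while the large-scale metric still expands like a high-girth graph, e.g.\ by taking a high-girth, high-degree host and performing a local clique-substitution (or line-graph-type) transformation that destroys claws without collapsing the geometry responsible for a large cop number; verifying claw-freeness and the cop-number lower bound simultaneously is the crux here.

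For the converse (boundedness), I would first reduce to a single forbidden parameter pair: a linear forest on $n$ vertices is an induced subgraph of $\ell P_k$ with $\ell=k=n$, and whenever $F$ is an induced subgraph of $H$ one has $\mathsf{Forb}(F)\subseteq\mathsf{Forb}(H)$, so $c(\mathsf{Forb}(F))\le c(\mathsf{Forb}(\ell P_k))$ and it is enough to bound the latter. I would treat $P_k$ first: a connected $P_k$-free graph has diameter at most $k-2$ (a shortest path is induced), and then run an Aigner--Fromme geodesic-guarding strategy, in which a single cop permanently guards any shortest path, combined with a Bacs\'o--Tuza-type structural result supplying, in a connected $P_k$-free graph, a connected dominating subgraph of controlled structure (e.g.\ excluding $P_{k-1}$). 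Guarding the geodesics of this dominating subgraph with a bounded number of cops either captures the robber or confines it to a region governed by a strictly shorter forbidden path, so an induction on $k$ terminates with an explicit bound $f(k)$. Passing from $P_k$ to $\ell P_k$ then costs only a bounded additive amount, via a Ramsey-type dichotomy: either a bounded set of vertices dominates the graph, or one locates many disjoint induced paths that would assemble into an induced $\ell P_k$.

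The hard part will be twofold. In the unboundedness direction it is the explicit claw-free construction with a provably large cop number, precisely because the standard sparse high-girth examples are claw-rich. In the boundedness direction it is converting the qualitative picture ``guarding shrinks the robber's territory'' into a quantitative, terminating strategy with an explicit bound; the delicate point is making the dominating-set reduction interact cleanly with geodesic guarding so that the induction on the length of the forbidden path is genuinely valid.
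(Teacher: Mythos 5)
A point of reference first: the paper you were given does not prove this statement at all --- it is quoted verbatim from \cite{joret2010cops} --- so your attempt can only be measured against the proof in that reference. Your high-level decomposition is in fact the same as the one used there: high-girth graphs for the case where $F$ contains a cycle, claw-free graphs for the case where $F$ is a forest with a branch vertex, and a reduction to $\ell P_k$ plus a guarding induction for boundedness. Moreover the cycle case as you present it is essentially complete: girth exceeding $|V(F)|$ excludes every subgraph copy of $F$, Aigner--Fromme gives $c(G)\ge\delta(G)$ when the girth is at least $5$, and graphs of simultaneously large girth and large minimum degree exist.

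The problem is that the other two thirds of the theorem are exactly the steps you defer, and flagging them as ``the crux'' and ``the hard part'' does not discharge them. (a) You never exhibit claw-free graphs of unbounded cop number. The known construction is the line graph $L(G)$ of a graph $G$ of girth at least $5$ and large minimum degree (line graphs are claw-free for free), but the lower bound on $c(L(G))$ has to be proved by rerunning the Aigner--Fromme evasion argument inside $L(G)$, where every vertex neighbourhood is a union of two cliques; you cannot quote the girth-$5$ bound itself, since $L(G)$ contains triangles as soon as $\delta(G)\ge 3$. Without such a lemma (or an inequality relating $c(L(G))$ to $c(G)$, which also requires proof) the branch-vertex case is simply open. (b) In the boundedness direction, the reduction $\mathsf{Forb}(F)\subseteq\mathsf{Forb}(\ell P_k)$ and the diameter observation are correct, but the inductive engine is missing: you gesture at a Bacs\'o--Tuza-type dominating structure interacting with geodesic guarding without stating the lemma that makes the induction close, and you yourself identify this interaction as the delicate point. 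The route that actually works inducts on the number of components $\ell$: a cop can permanently guard an isometric path (which has fewer than $k$ vertices in a $P_k$-free graph), and once an induced $P_k$ and a suitable ball around it are sealed off, any induced $(\ell-1)P_k$ among far-away vertices would combine with the guarded copy into an induced $\ell P_k$; hence the robber's territory is $(\ell-1)P_k$-free and induction applies. As written, your proposal is a correct plan whose two decisive lemmas --- the ones carrying all the content --- are left unproved.
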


\begin{theorem}\emph{(\cite{joret2010cops})}
    For any integer $t \geq 2$, $c(\mathsf{Forb}(P_t)) \leq t-2$.
\end{theorem}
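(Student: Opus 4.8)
The plan is to induct on $t$, using as the sole engine the classical fact (Aigner and Fromme \cite{aigner1984game}) that a single cop can \emph{guard} a shortest path: given any geodesic $P$ of $G$, after finitely many rounds one cop can occupy $P$ so that the robber is captured the instant it steps onto $V(P)$, and is therefore permanently confined to one connected component of $G-V(P)$. The disconnected case is free from the definition $c(G)=\max_i c(G_i)$, so I may assume $G$ connected throughout. For the base case $t=3$, a connected $P_3$-free graph is complete, since $u\not\sim v$ would force the first three vertices of a shortest $u$--$v$ path to induce a $P_3$; as $c(K_n)=1=t-2$, the base case holds (the degenerate $t=2$ is covered by the convention $c(K_1)=0$).

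For the inductive step, assume the bound for $t-1$ and let $G$ be connected and $P_t$-free. The idea is to spend one cop guarding a well-chosen geodesic $P$, trapping the robber in a component $H$ of $G-V(P)$, and then to run the induction hypothesis \emph{inside} $H$: since the robber can never leave $H$, if $H$ is $P_{t-1}$-free then $t-3$ further cops capture it there (they execute $H$'s winning strategy while physically in $G$, only ever moving within $H$), for a total of $1+(t-3)=t-2$ cops. So the entire step reduces to the single assertion that \emph{there is a geodesic $P$ of $G$ such that every component of $G-V(P)$ is $P_{t-1}$-free}; the cop-count bookkeeping (one cop per guarded geodesic, at most $t-2$ of them) is then routine.

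I expect this assertion to be the main obstacle, and two points deserve care. First, the geodesic must be chosen adaptively, because removing an \emph{arbitrary} geodesic need not shorten the longest induced path: in a clique with one pendant attached to each of its $m$ vertices — which is $P_5$-free — every geodesic meets at most two clique-vertices, so for $m\geq 4$ the robber's component is again a clique-with-pendants containing an induced $P_4$. The honest fix is to first apply the standard retract reduction $c(G)=c(G-u)$ for a dominated vertex $u$ (here $N[u]\subseteq N[w]$ for some $w$; note $G-u$ remains connected and $P_t$-free, and $\abs{V(G)}$ drops), so that one may assume $G$ has no dominated vertex before choosing $P$. Second, to verify $P_{t-1}$-freeness of a component $H$ one argues by contradiction: given an induced path $Q$ on $t-1$ vertices in $H$, walk along a shortest path from $Q$ to $V(P)$ and use the geodesic property of $P$ to control which vertices of $P$ see which vertices of $Q$, exhibiting a vertex of $P$ adjacent to exactly one endpoint of $Q$ and to no other vertex of $Q$; prepending it to $Q$ produces an induced $P_t$ in $G$, the desired contradiction.

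Making the choice of $P$ and this extension argument succeed simultaneously — so that genuine progress toward a smaller instance is guaranteed on the corner-free graphs left after pruning — is the technical heart of the proof. Everything else (the geodesic-guarding lemma, the retract lemma, connectivity of $G-u$, and the additive cop count across phases) is standard, and the induction closes once the reduction claim is established.
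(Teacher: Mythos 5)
You are proving a statement that this paper does not prove at all: it is quoted from the cited reference \cite{joret2010cops}, so your attempt has to stand on its own merits. As written it does not, and by your own account: you reduce the entire inductive step to the assertion that, after deleting dominated vertices, there exists a geodesic $P$ of $G$ such that every component of $G-V(P)$ is $P_{t-1}$-free, and then you defer exactly this assertion (``the technical heart''). Everything you do establish (geodesic guarding, the corner reduction, the cop-count bookkeeping) is indeed standard; the one claim carrying all the difficulty is conjectured, not proved.

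The situation is worse than a missing lemma: the deferred assertion is false, even for corner-free graphs, so no cleverer choice of geodesic can repair the reduction. Let $G$ consist of two disjoint cliques $\{a_1,\dots,a_m\}$ and $\{b_1,\dots,b_m\}$ with $m\ge 6$, joined by the perfect matching $a_ib_i$, $1\le i\le m$. This $G$ is connected and $P_5$-free, since an induced path meets each clique in at most two vertices. It has no dominated vertex: a vertex $y\neq a_i$ with $N(a_i)\subseteq N[y]$ would have to be adjacent to $b_i$ and to every $a_j$ with $j\neq i$, and no vertex is; symmetrically for the $b_i$. Since $G$ has diameter $2$, every geodesic has at most three vertices. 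Yet deleting \emph{any} set $S$ of at most three vertices leaves a connected graph containing an induced $P_4$: pick three indices $i,j,k$ untouched by $S$ (possible as $m\ge 6$) and take $a_ia_jb_jb_k$. So for $t=5$ the robber's component after guarding any geodesic is never $P_4$-free, and your induction has no first step on this graph---even though $c(G)=2$ here (two cops on $a_1,b_1$ dominate $G$), so it is only your reduction, not the theorem, that fails. The proof in \cite{joret2010cops} avoids this trap by not inducting on $t$ and not guarding geodesics: stationary cops are placed one at a time on vertices $v_1,v_2,\dots$, where $v_{i+1}$ is chosen with a neighbor in the robber's current territory, adjacent to $v_i$, and non-adjacent to $v_1,\dots,v_{i-1}$; such a vertex exists as long as the robber is free, so the guards form a growing induced path while the territory strictly shrinks, and if $t-2$ guards did not suffice, the guard path extended by one more guard and one territory vertex would be an induced $P_t$. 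Your instinct that the choice must be adaptive is correct, but the adaptivity has to happen guard-by-guard within a single game, not once per level of an induction on $t$.
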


\begin{theorem}\emph{(\cite{chudnovsky2024cops})}
    $c(\mathsf{Forb}(P_5)) \leq 2$.
\end{theorem}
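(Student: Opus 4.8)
The plan is to reduce to the connected case (this is legitimate since $c(\cdot)$ of a disconnected graph is the maximum over its components) and then exploit two structural features of connected $P_5$-free graphs. First I would record that every geodesic (shortest path) of length $\ell$ induces a $P_{\ell+1}$, so a pair of vertices at distance $\ge 4$ would yield an induced $P_5$; hence $\operatorname{diam}(G)\le 3$. Second, I would invoke the structural theorem of Bacsó and Tuza: every connected $P_5$-free graph has a dominating set $D$ such that $G[D]$ is either a clique or an induced $P_3$. Alongside these I would prove the elementary \emph{clique-guarding lemma}: a single cop placed on a clique $Q$ can forever keep the robber off $Q$, and because $Q$ is complete the cop may reposition to any vertex of $Q$ in one move, so by sitting on a chosen $q\in Q$ it additionally forbids the robber the whole set $N[q]$ (the robber cannot step into $N[q]$ without being captured on the cop's next move).

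For the first case, suppose $D=K$ is a dominating clique. I would assign cop $A$ to guard $K$, which confines the robber to $S=V\setminus K$; since $K$ dominates, the robber's current vertex $r$ always has a neighbour $k_r\in K$, and cop $A$ will sit on such a $k_r$, thereby forbidding the robber the entire closed neighbourhood $N[k_r]$. The role of cop $B$ is then to close in: using $\operatorname{diam}(G)\le 3$ it can reach distance at most $2$ of the robber quickly, after which cop $A$'s blocking of a full clique-neighbourhood strips away the robber's escape routes. I would formalise ``the robber cannot evade forever'' through a potential/trajectory argument: force the robber's successive positions to form a locally induced walk, so that an over-long escape sequence would exhibit an induced $P_5$, contradicting $P_5$-freeness.

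For the second case, suppose $D=\{a,b,c\}$ is a dominating induced path with $a\sim b\sim c$ and $a\not\sim c$. I would place cop $A$ on the centre $b$, forbidding the robber $N[b]$ and confining it to $T=V\setminus N[b]$, where every vertex is adjacent to $a$ or to $c$ but not to $b$; cop $B$ then finishes against this two-centre structure (alternatively one can guard the two edges $\{a,b\}$ and $\{b,c\}$ separately). This case contains $C_5$, which has no dominating clique and genuinely requires two cops, so it simultaneously shows the bound $2$ is best possible. As before, the capture reduces to a shrinking-territory argument powered by $\operatorname{diam}(G)\le 3$ and the absence of induced $P_5$'s.

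The structural preliminaries above are routine; the step I expect to be the main obstacle is the \emph{endgame} in each case, namely rigorously proving that cop $B$, aided by cop $A$'s neighbourhood-blocking, actually captures the robber in finitely many rounds. This demands both the right monovariant (the robber's reachable set, or the cop--robber distance, must be shown to decrease and never recover) and a careful case analysis of the local neighbourhood structure of $P_5$-free graphs to rule out perpetual evasion. I anticipate that almost all of the real length and subcase-splitting of the argument — together with the essential appeals to diameter $3$ and to induced-$P_5$ avoidance — will concentrate precisely here.
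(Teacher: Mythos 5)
Note first that the paper you are working from does not prove this statement at all: it is quoted as an external theorem of Chudnovsky, Norin, Seymour and Turcotte \cite{chudnovsky2024cops}, so your attempt can only be measured against that original proof. Your preliminaries are correct and standard: a shortest path is induced, so a connected $P_5$-free graph has diameter at most $3$; the Bacs\'o--Tuza theorem does give a dominating set inducing a clique or a $P_3$; and a cop stationed on a clique $Q$ does guard $Q$ (and, while parked at $q \in Q$, also threatens $N[q]$). But none of these facts is where the theorem lives, and the part you defer to the ``endgame'' is not a gap in exposition --- it is the entire mathematical content of the result, which was a well-known open problem before \cite{chudnovsky2024cops}.

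Concretely, the mechanism you propose for the endgame fails. The robber's successive positions form a walk, not an induced path: the robber may oscillate among two or three vertices forever, revisit old territory, and its trajectory may have arbitrarily many chords, so ``an over-long escape sequence'' exhibits no induced $P_5$ and yields no contradiction with $P_5$-freeness. Moreover, the set your tracking cop forbids is $N[k_r]$ for a vertex $k_r \in K$ that changes every round; the forbidden region is therefore not monotone, the robber may legally re-enter vertices it was just driven from, and no monovariant (shrinking territory or decreasing cop--robber distance) is established. The appeal to $\operatorname{diam}(G)\le 3$ is similarly not a valid step: small diameter lets a cop get \emph{near} a stationary target, but says nothing about closing distance against a moving robber, and cop--robber distance is not monotone under pursuit. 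The published proof is not a two-case domination argument with a generic ``closing in'' phase; it is a substantially longer analysis in which the second cop's capture is obtained through a delicate, multi-lemma study of the robber's possible escape structure. In short, you have assembled the easy, known ingredients and left unproved exactly the statement being claimed.
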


Turcotte \cite{turcotte2022cops} raised the following interesting conjecture:

\begin{conjecture}\emph{(\cite{turcotte2022cops})}\label{big-conj}
    For any positive integer $p$, we have:
    \begin{itemize}
        \item For any graph $G$ such that $\alpha(G)<p$, then $c(G)\leq p-2$.
    \end{itemize}
\end{conjecture}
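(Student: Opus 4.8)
The plan is to prove the statement by induction on $p$, anchored by the elementary observation that the entire force of the conjecture is to save a single cop over a trivial domination bound. First I would record the baseline: in any graph $G$ a maximal independent set is dominating, so $\gamma(G)\le\alpha(G)$; placing $\gamma(G)$ cops on a minimum dominating set and, on the first move, advancing whichever cop is adjacent to the robber captures it, giving $c(G)\le\gamma(G)\le\alpha(G)$. Thus $\alpha(G)<p$ already yields $c(G)\le p-1$, and the whole content of the conjecture lies in improving this to $c(G)\le p-2$. Equivalently, writing $k=\alpha(G)$ and noting that for fixed $G$ the tightest instance is $p=k+1$, the target reduces to the uniform bound $c(G)\le k-1$ for every $G$, to be proved by induction on $k$.

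For the inductive step I would use a \emph{confinement lemma} modeled on the geodesic-guarding technique of Aigner and Fromme \cite{aigner1984game}: one cop can, after finitely many rounds, shadow the robber's projection onto a fixed isometric (shortest) path $P$ and thereafter forbid the robber from ever occupying a vertex of $P$, so the robber is trapped inside a single connected component $G'$ of $G-V(P)$. The path $P$ should be chosen between two vertices of a maximum independent set of $G$ in such a way that $V(P)$ intersects enough maximum independent sets to force the strict drop $\alpha(G')\le\alpha(G)-1=k-1$. Granting this, one cop enforces the confinement while the induction hypothesis, applied to $G'$ with $\alpha(G')<k$, supplies $c(G')\le k-2$; together these give $c(G)\le 1+(k-2)=k-1$, closing the induction.

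The main obstacle is exactly the gap of a single cop, which concentrates in two places. The first is making the confinement lemma deliver the \emph{strict} inequality $\alpha(G')\le\alpha(G)-1$ robustly against every robber strategy: the naive guarding argument only reproduces the baseline $c(G)\le\alpha(G)$ unless the guarded path is engineered to absorb an independent vertex from every surviving maximum independent set, and proving that a suitable $P$ always exists is the delicate, graph-dependent heart of the argument. The second is the base of the induction, where no reduction is available and the saved cop must be produced by direct structural analysis; this is precisely the low-independence-number regime — culminating in $\mathsf{Forb}(4K_1)$ at $p=4$ — whose complete case analysis the present paper carries out, so the principal difficulty in extending to all $p$ is to propagate such a classification upward to every value of $k$.
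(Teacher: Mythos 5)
The decisive problem is that \cref{big-conj} is a \emph{conjecture}, not a theorem of the paper, and the surrounding text explicitly refutes it: by \cref{lowerbound-12} (Char, Maniya and Pradhan \cite{char2025counterexample}) there exists a $4K_1$-free graph --- hence a graph with $\alpha(G)\le 3$ --- whose cop number is at least $3$, falsifying the case $p=4$, and Clow and Zaguia \cite{clow2025cops} falsify the conclusion for every $p\ge 4$. So there is no proof to reconstruct, and your plan cannot be repaired. Your own (correct) reduction makes the failure concrete: you reduce the statement to the uniform bound $c(G)\le\alpha(G)-1$ for all $G$, and the counterexample of \cite{char2025counterexample} violates exactly this bound at $\alpha(G)=3$. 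The small cases of your induction fail as well: for $\alpha(G)=1$ the bound demands $c(G)\le 0$ while $c(K_n)=1$, and for $\alpha(G)=2$ the cycle $C_5$ has $\alpha(C_5)=2$ but $c(C_5)=2$. (This last example shows that, as literally printed with ``any positive integer $p$'', the statement fails already at $p=3$; the conjecture was only meaningful, and only open, for $p\ge 4$, where it is now refuted.) Your baseline observation $c(G)\le\gamma(G)\le\alpha(G)$ is correct, but it is the only part of the proposal that survives.

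Consequently, the step you flag as the ``delicate, graph-dependent heart'' --- choosing an isometric path $P$ so that the Aigner--Fromme guard confines the robber to a component $G'$ of $G-V(P)$ with $\alpha(G')\le\alpha(G)-1$ --- is not a gap to be filled but an impossibility: if such a confinement scheme existed in general, it would prove by induction a bound that is false both at the base ($\alpha=1,2$, as above) and at $\alpha=3$. Guarding an isometric path gives $\alpha(G')\le\alpha(G)$ because $G'$ is induced, and nothing can force strictness for every robber strategy. You also misread the role of the present paper in your proposed base case: the paper does \emph{not} establish the conjecture's conclusion for $\mathsf{Forb}(4K_1)$; it proves $c(G)\le 2$ only for such graphs on at most $13$ vertices, while \cref{lowerbound-12} guarantees a $4K_1$-free graph on at most $16$ vertices with $c(G)\ge 3$. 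In other words, the paper's case analysis locates the threshold at which the conjectured conclusion breaks, so there is no classification to ``propagate upward to every value of $k$''.
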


In recent work, Char, Maniya, and Pradhan \cite{char2025counterexample} established the following theorem, showing that \cref{big-conj} is false when $p=4$.

\begin{theorem}\emph{(\cite{char2025counterexample})}\label{lowerbound-12}
    Let $n$ be the largest integer such that
    \[
    c\bigl(\{G \in \mathsf{Forb}(4K_1): |V(G)| \leq n\}\bigr) \leq 2.
    \]
    Then, $12 \leq n \leq 15$.
\end{theorem}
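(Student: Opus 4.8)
The statement is a conjunction of two bounds that I would attack by entirely different means: the lower bound $n \ge 12$, asserting that \emph{every} $4K_1$-free graph on at most $12$ vertices is $2$-cop-win, and the upper bound $n \le 15$, asserting the \emph{existence} of a single $4K_1$-free graph on $16$ vertices with cop number at least $3$ (which already makes the defining property fail at $16$). The plan is to treat these as a universal small-graph statement and a single construction, respectively.

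For the lower bound I would first reduce to a manageable finite family. Since $c$ of a disconnected graph is the maximum over its components and every component of a $4K_1$-free graph is again $4K_1$-free, I may assume $G$ is connected; indeed if $G$ has more than one component, then $\alpha(G) \le 3$ forces all but at most one component to be a clique, and cliques are cop-win. The key structural step is \emph{corner elimination}: if a vertex $v$ is dominated by another vertex $u$, i.e. $N[v] \subseteq N[u]$, then $G - v$ is a retract of $G$ and $c(G) = c(G - v)$; in particular, twins may always be deleted. Because $\alpha(G) \le 3$ makes $G$ dense (its complement is $K_4$-free, hence sparse by Turán), dominated vertices are abundant, so iterated corner elimination should collapse $G$ to a small dismantlability-irreducible core $G'$ with $|V(G')| \le 12$, $\alpha(G') \le 3$, and no dominated vertices. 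It then remains to bound $c(G')$ on these finitely many cores, where I would exploit $\gamma(G') \le \alpha(G') \le 3$ (a maximal independent set is dominating and has size at most $\alpha$) together with corner-freeness to design an explicit two-cop strategy, backed by a direct check of the surviving cases, since the cop number of a fixed graph is decidable via the standard monotone fixpoint on cop-robber positions.

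For the upper bound on $n$ I would exhibit an explicit graph $H$ on $16$ vertices, verify $\alpha(H) \le 3$ by checking that no four of its vertices are pairwise nonadjacent, and prove $c(H) \ge 3$ by giving a robber strategy that survives any two cops: for each pair of cop positions the robber keeps a vertex outside both closed neighborhoods while maintaining a suitable distance invariant, witnessing a robber win in the two-cop game.

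The hard part will be this construction together with its lower-bound certification. The usual device for forcing cop number at least $3$, namely girth at least $5$ with minimum degree at least $3$ (as for the Petersen graph), is unavailable here, since girth $\ge 5$ forces a large independent set and violates $4K_1$-freeness. Thus the $16$-vertex example must encode a robber-win \emph{despite} being dense, and both locating such a graph and certifying $c(H) \ge 3$ uniformly over all two-cop configurations are the principal difficulties. On the positive side, the residual obstacle is ruling out that some corner-free $4K_1$-free graph on at most $12$ vertices slips through with cop number $3$; this is exactly where the tension between $\alpha \le 3$ and the absence of dominated vertices must be pushed hardest.
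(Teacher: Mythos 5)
First, a point of context: the paper does not prove this statement at all --- it is imported verbatim from Char, Maniya and Pradhan \cite{char2025counterexample}, and the machinery the paper itself develops (\cref{max-n-6}, \cref{corner}, and the computations of \cref{computation}) serves the new bound $n\ge 13$, not this one. So your proposal must be judged on its own merits, and on those merits it has a genuine gap in each half. For the upper bound $n\le 15$: the entire content of this half is the existence of a specific $4K_1$-free graph on $16$ vertices with cop number at least $3$ (this is exactly the counterexample to \cref{big-conj} for $p=4$). Your proposal produces no such graph and no robber strategy; it describes the properties such a graph would need and then --- correctly --- identifies locating it and certifying $c(H)\ge 3$ as the principal difficulty. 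The observation that girth-based constructions are incompatible with $\alpha\le 3$ is sound, but it narrows nothing down; what remains unproved is precisely the theorem.

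For the lower bound $n\ge 12$: the reduction to connected graphs is fine, and corner elimination is usable, though your statement $c(G)=c(G-v)$ is not unconditional --- per \cref{corner} it holds when $c(G)\ge 2$, which is the direction you need, while deleting a corner of a cop-win graph can raise the cop number to $2$. The failure is the load-bearing claim that $\alpha\le 3$ makes dominated vertices ``abundant,'' so that iterated elimination collapses every graph to a tractable core. Density does not produce corners: the complement of the Petersen graph is $6$-regular on $10$ vertices, has independence number $2$, and has no vertex $v$ with $N(v)\subseteq N[u]$ for any $u\neq v$ (in the Petersen graph two distinct vertices have at most one common neighbour, which rules this out), so corner elimination does nothing to it; dense, corner-free graphs of this kind abound on up to $12$ vertices. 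Hence your ``surviving cores'' are essentially all corner-free graphs in $\mathsf{Forb}(4K_1)$ on at most $12$ vertices, and the concluding ``direct check'' is an appeal to an unspecified and very large computation --- which is exactly the substance of the cited proof, where structural theorems such as \cref{max-n-6} are what make the search feasible. In short, both halves delegate their decisive step (the $16$-vertex construction; the finite verification) to work the proposal does not actually carry out.
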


Soon after, Clow and Zaguia \cite{clow2025cops} proved that the conclusion of \cref{big-conj} is false for any $p\geq 4$.

In the paper of Char et al. , they posed the question of determining the exact value of $n$. Our main result improves the lower bound to $n \geq 13$, which we state formally as follows.

\begin{theorem}\label{main-thm}
    Every $4K_1$-free graph $G$ on $13$ vertices satisfies $c(G) \leq 2$.
\end{theorem}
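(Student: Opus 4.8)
The plan is to reduce to a single hard configuration and then beat the trivial domination bound by exactly one cop. We may assume $G$ is connected, since otherwise every component has at most $12$ vertices and \cref{lowerbound-12} already gives cop number at most $2$. First I would record the general inequality $c(G)\le\gamma(G)$: placing cops on a dominating set forces the robber to be placed inside some cop's closed neighborhood, and that cop captures on its first move. A greedy argument then shows $\gamma(G)\le\alpha(G)$ for every graph --- repeatedly add an undominated vertex; the chosen vertices are pairwise nonadjacent, so the process halts within $\alpha(G)$ steps. Since $G\in\mathsf{Forb}(4K_1)$ gives $\alpha(G)\le 3$, we obtain $c(G)\le\gamma(G)\le 3$ for free, and if $\alpha(G)\le 2$ then $\gamma(G)\le 2$, so $c(G)\le 2$. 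Hence I may assume $\alpha(G)=3$, fix a maximum independent set $\{a,b,c\}$, and note it is automatically dominating, as any vertex adjacent to none of the three would extend it to an independent $4$-set.

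Second, I would extract the structure forced by $\alpha(G)=3$. Partition $V(G)\setminus\{a,b,c\}$ into the seven classes indexed by the nonempty subset of $\{a,b,c\}$ that a vertex is adjacent to. Each of the three \emph{private} classes is a clique: two nonadjacent vertices adjacent to $a$ only would, together with $b$ and $c$, form an independent $4$-set. Thus $K_a,K_b,K_c$ --- obtained by adjoining $a,b,c$ to their private classes --- are cliques. The same trick bounds the \emph{shared} classes: a class adjacent to exactly $\{a,b\}$ has independence number at most $2$, since an independent triple there plus $c$ would again be a $4$-set. Crucially, with $|V(G)|=13$ these seven classes contain exactly $10$ vertices, so the cliques and shared classes cannot all be large at once; this count is the lever separating $13$ from $14$.

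Third, the heart of the argument is to save the one extra cop. Against two cops parked on $a$ and $b$, the robber's only safe vertices are those dominated by neither, namely the pendant clique $K_c$; any move out of $K_c$ leaves the robber adjacent to $a$ or $b$ and loses on the next cop turn. Since $K_c$ is a clique, the robber is caught the instant a cop occupies a vertex of $K_c$ while the robber is still inside. The task is therefore to walk one cop from $a$ (or $b$) into $K_c$ along the \emph{bridge} vertices --- the shared classes adjacent to $c$ --- while the stationary cop preserves enough of the confinement that the robber cannot escape through the neighborhoods vacated by the travelling cop.

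I expect the genuine difficulty to be exactly this infiltration-versus-escape bookkeeping, carried out as a case analysis on which classes are empty and on their sizes. When a cop leaves $a$, the private class of $a$ (and part of a shared class) stops being guarded, potentially opening an escape route, and one must check that the robber cannot exploit it to relocate to another pendant clique faster than the cops can re-confine it. This is where $|V(G)|=13$ is indispensable: the known cop-number-$3$ construction needs all three pendant cliques and their bridges simultaneously large enough to keep the robber one step ahead, which forces at least $14$ vertices. I would organize the cases so that in every admissible distribution of the $10$ non-central vertices, either two of $\{a,b,c\}$ already dominate (so $\gamma\le 2$ and we are done), or some bridge class is nonempty and short enough to let a cop cross into the robber's clique in time, yielding $c(G)\le 2$ throughout.
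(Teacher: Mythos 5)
Your first two stages are correct but they are the easy part: the chain $c(G)\le\gamma(G)\le\alpha(G)\le 3$, the reduction to $\alpha(G)=3$ with a dominating maximum independent set $\{a,b,c\}$, the fact that the private classes are cliques, and the bound $\alpha\le 2$ on the shared classes are all standard consequences of $4K_1$-freeness. The genuine gap is that the third stage --- the only place where the hypothesis $|V(G)|=13$ would actually be used --- is never carried out. Confinement of the robber to $K_c$ holds only while both cops sit on $a$ and $b$; the moment one cop starts walking toward $K_c$ through a bridge class, the clique $K_a$ (and parts of the shared classes) become safe, the robber can relocate there, and the game restarts with the roles of $a$ and $c$ permuted. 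This circulation is precisely the mechanism by which $4K_1$-free graphs of cop number $3$ exist, so ruling it out on $13$ vertices is not ``bookkeeping'': it is the entire content of the theorem, and your proposal replaces it with ``I expect'' and ``I would organize the cases so that\dots''. Moreover, the quantitative claim you lean on --- that the known cop-number-$3$ construction ``forces at least 14 vertices'' --- is unsubstantiated: the counterexample of Char, Maniya and Pradhan lives on $16$ vertices, and by \cref{lowerbound-12} the existence of counterexamples on $14$ or $15$ vertices is open, so no such fact about ``14'' can be cited or assumed.

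For calibration, note that the paper does not manage this case analysis by hand either. It first disposes of $\Delta(G)\ge 7$ via \cref{max-n-6}, then runs a long sequence of explicit two-cop strategies to show that under $c(G)\ge 3$ and $\Delta(G)=6$ the graph $H=G-N[u]$ must be isomorphic to $B_6$ or $T_6$, and even after all of that structural work it still requires a computer search over several hundred thousand candidate graphs (types A--D in \cref{computation}). That the authors resort to machine verification after substantially narrowing the search space is strong evidence that the infiltration-versus-escape analysis you defer does not collapse into a short list of hand-checkable cases. As written, your argument proves only $c(G)\le 3$, which holds for every $4K_1$-free graph regardless of order; everything beyond that is a plan, not a proof.
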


Our proof relies on the following result from their paper.

\begin{theorem}\emph{(\cite{char2025counterexample})}\label{max-n-6}
    Let $G \in \mathsf{Forb}(4K_1)$ be a graph on $n$ vertices. If $\Delta(G) \geq n-6$, then $c(G) \leq 2$.
\end{theorem}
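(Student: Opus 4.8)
The plan is to exploit the high-degree vertex to confine the robber to a small set and then to catch it there with the second cop. Let $v$ be a vertex of maximum degree, so that $d(v)=\Delta(G)\ge n-6$, and set $A:=V(G)\setminus N[v]$, the set of non-neighbours of $v$. Then $|A|=n-1-d(v)\le 5$. Since $v$ is non-adjacent to every vertex of $A$, any independent set of $G[A]$ together with $v$ is independent in $G$; as $G\in\mathsf{Forb}(4K_1)$ means $\alpha(G)\le 3$, this forces $\alpha(G[A])\le 2$. We may assume $G$ is connected: by the convention $c(G)=\max_i c(G_i)$, any component not containing $v$ lies entirely in $A$, hence has at most $5$ vertices and therefore cop number at most $2$ (the smallest graph of cop number $3$ has $10$ vertices).

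First I place cop $1$ on $v$ and keep it there. This confines the robber to $A$: if the robber ever occupies a vertex $u\in N(v)$, then on the next round cop $1$ moves from $v$ to $u$ and captures it. Consequently the robber can only traverse edges of $G[A]$, so the remaining game is equivalent to a one-cop game in which the second cop roams all of $G$ but the robber is restricted to the board $G[A]$.

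If $c(G[A])\le 1$, cop $2$ simply executes a cop-winning strategy inside $G[A]$, never leaving $A$, and wins. More generally, if some vertex $u\in V(G)$ dominates $A$ (i.e.\ $A\subseteq N[u]$), cop $2$ walks to $u$; thereafter the robber, trapped in $A\subseteq N[u]$, always lies in cop $2$'s closed neighbourhood and is caught on the next move. Since a maximal independent set of $G[A]$ is dominating, $\gamma(G[A])\le\alpha(G[A])\le 2$, and the graphs on at most five vertices with independence number at most two are easily classified, the only surviving configurations are those in which $G[A]$ is one of the few non-dismantlable graphs with $\alpha\le 2$ and $|A|\le 5$---namely $C_4$, $C_5$, the ``house'' ($C_5$ with one chord), and their relatives---and no single vertex of $G$ dominates $A$. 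Each such $G[A]$ is built around an induced $C_4$ on which a lone cop cannot corner the robber. (As a side remark, all of these graphs are $P_5$-free, so if $G$ itself is $P_5$-free the theorem already follows from $c(\mathsf{Forb}(P_5))\le 2$ \cite{chudnovsky2024cops}; thus we may additionally assume $G$ contains an induced $P_5$.)

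The heart of the argument---and the step I expect to be the main obstacle---is this finite collection of crux cases. Here the robber can survive against a single cop on the bad $4$- or $5$-cycle inside $A$, so I cannot keep cop $1$ frozen on $v$; instead the two cops must cooperate across the partition $A\mid N[v]$. The idea is that one cop always covers $v$, which makes any excursion of the robber into $N(v)$ immediately losing (as $N[v]$ is dominated by $v$), while the other cop drives the robber around the short cycle in $A$, using the edges between $A$ and $N(v)$---guaranteed to exist by connectivity---as shortcuts; whenever the robber threatens to escape into $N(v)$ the two cops swap roles. Verifying that this coordination strictly shrinks the robber's reachable territory after each exchange, so that capture occurs in finitely many rounds, is the technical core: one must check each small graph $G[A]$ together with the constraints that $\alpha(G)\le 3$ places on how $A$ attaches to $N(v)$. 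Because the list of admissible graphs $G[A]$ and of possible attachment patterns is finite and short, this reduces to a bounded case analysis.
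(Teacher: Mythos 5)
First, a point of reference: this paper does not prove the statement at all --- Theorem~\ref{max-n-6} is imported verbatim from \cite{char2025counterexample} and used as a black box --- so the comparison can only be with the cited source, which establishes it via a substantial case analysis of exactly the configurations you defer. Your setup is sound and matches the natural first steps: with $A := V(G)\setminus N[v]$ one gets $|A|\le 5$ and $\alpha(G[A])\le 2$, a cop parked on $v$ does confine the robber to $G[A]$ (cops move first, so any robber step into $N(v)$ loses), and the cases where $G[A]$ is cop-win or where some vertex of $G$ dominates $A$ are dispatched correctly, as is the reduction to the connected case.

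There is, however, a genuine gap, and you flag it yourself: the crux cases are announced but never proved. Three concrete problems. (i) Your classification is incomplete as stated: besides $C_4$, $C_5$ and the house $\overline{P_5}$, the graph $\overline{P_3+K_2}$ (a $C_4$ with an added true twin) is also non-dismantlable with $\alpha=2$ on five vertices, so \emph{``and their relatives''} conceals work that must actually be done. (ii) The proposed mechanism is internally inconsistent: the claim that excursions into $N(v)$ are \emph{``immediately losing''} holds only while a cop physically occupies $v$, yet your swap manoeuvre requires that cop to leave $v$; during that window the robber can enter $N(v)$, loiter, and re-enter $A$ elsewhere. No invariant is defined, and \emph{``the reachable territory strictly shrinks after each exchange''} is asserted, not proven --- on $C_4$ or $C_5$ the robber's shadow does not obviously shrink under any naive pursuit, which is precisely why these cores have cop number $2$ in isolation. (iii) \emph{``The list of attachment patterns is finite''} needs an argument: $N(v)$ has up to $n-6$ vertices and is unbounded, so finiteness must be recovered by classifying vertices of $N(v)$ into their at most $2^{|A|}$ neighbourhood types in $A$ (using $\alpha(G)\le 3$ to constrain which types can coexist), and even then one must construct and verify a two-cop strategy for each admissible profile. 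In short, what you have is a correct reduction plus a plan; the actual content of the theorem --- the coordinated strategies for the non-dismantlable cores under the $4K_1$-freeness constraints, which is what \cite{char2025counterexample} carries out in detail --- is exactly the part left unexecuted, so the proposal does not constitute a proof.
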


We will also use the following well-known lemma, which appears, for example, in \cite{turcotte20214cop}.

\begin{lemma}\emph{(\cite{turcotte20214cop})}\label{corner}
    Let $G$ be a graph, and let $u$ and $v$ be vertices of $G$ such that $N(v) \subseteq N[u]$. Then, 
    \begin{itemize}
        \item if $c(G) \geq 2$, then $c(G) = c(G - v)$;
        \item if $c(G) = 1$, then $c(G - v) \in \{1, 2\}$.
    \end{itemize}
\end{lemma}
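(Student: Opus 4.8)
The plan is to couple two games through the ``shadow'' map that collapses $v$ onto $u$, and to run this coupling in both directions so as to sandwich $c(G)$ between values of $c(G-v)$. Define $\phi\colon V(G)\to V(G-v)$ by $\phi(v)=u$ and $\phi(x)=x$ for $x\neq v$. The hypothesis is exactly what makes $\phi$ edge-preserving: for an edge $vw$ of $G$ we have $w\in N(v)\subseteq N[u]$, so $\phi(v)\phi(w)=uw$ is again an edge of $G-v$ (or $w=u$, in which case the edge collapses). Hence $\phi$ is a graph homomorphism fixing $G-v$ pointwise, and consequently every legal robber walk in one of the two graphs projects under $\phi$ to a legal walk in the other. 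This single observation drives both inequalities.

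First I would prove $c(G-v)\le c(G)$. Take a winning strategy for $c(G)$ cops on $G$ and play it against a robber confined to $G-v$. The cops simulate the $G$-game, but whenever the $G$-strategy would send a cop to $v$ they instead send it to $u=\phi(v)$; since $\phi$ is a homomorphism this is always a legal move keeping each cop on the $\phi$-image of its intended position. When the simulated game ends with a cop on the robber's vertex $p\in V(G-v)$, the real cop sits on $\phi(p)=p$, so the robber is caught in $G-v$. This already delivers the second bullet: if $c(G)=1$ then $c(G-v)\le 1$, so in particular $c(G-v)\in\{1,2\}$; and it supplies the inequality ``$\le$'' needed for the first bullet.

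For the reverse inequality $c(G)\le c(G-v)$, which upgrades the first bullet to an equality, I would lift a winning strategy for $k:=c(G-v)$ cops from $G-v$ up to $G$. The cops chase the robber's shadow $\phi(R)\in V(G-v)$; because $\phi$ is a homomorphism the shadow always moves legally in $G-v$, so the $(G-v)$-strategy forces a cop onto $\phi(R)$ in finitely many rounds. If $R\neq v$ then $\phi(R)=R$ and the robber is captured outright. The only remaining configuration is $R=v$ with a cop on $u$, and here the domination hypothesis is used: every vertex the robber can occupy after leaving $v$ lies in $N[u]$ and is therefore controlled by the cop holding $u$, so the robber is captured as it stays on or steps out of $v$. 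Combining this with the previous paragraph yields $c(G)=c(G-v)$.

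The main obstacle is precisely this endgame conversion, turning ``a cop on the shadow $u$'' into ``a cop on the real robber at $v$.'' It is the only place where the hypothesis $N(v)\subseteq N[u]$ is used in an essential way, and it is the step that must be arranged so that the cop guarding $u$ can actually consummate the capture rather than merely dominate $v$'s escape routes indefinitely. I would therefore structure the proof so that, from the first moment the robber threatens to enter $v$, the shadow-chasing cops maintain the invariant that $u$ is guarded, reducing the whole argument to this one clean capture; the split into two cases in the statement then simply records that the reverse lift (hence exact equality) is what one extracts when $c(G)\ge 2$, while in the cop-win regime one falls back on the weaker bound $c(G-v)\in\{1,2\}$ coming directly from the shadow inequality of the second paragraph.
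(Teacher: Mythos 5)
Your first direction (the shadow map $\phi$ pushing a winning strategy for $c(G)$ cops on $G$ down to $G-v$, giving $c(G-v)\le c(G)$) is sound, and it does deliver the paper's second bullet; note the paper itself only cites this lemma from the literature, so the comparison here is with the standard folklore argument, which indeed begins exactly as you do. The genuine gap is in your reverse lift, and it sits precisely at the spot you yourself flag as ``the main obstacle'': when the shadow strategy puts a cop on $u$ while the robber sits on $v$, the hypothesis $N(v)\subseteq N[u]$ concerns the \emph{open} neighborhood of $v$, so $u$ need not be adjacent to $v$. Your sentence ``the robber is captured as it stays on or steps out of $v$'' is false in that situation: a robber that simply stays on $v$ forever is never captured, since the cop holding $u$ cannot step onto $v$, and it cannot leave $u$ without reopening the robber's escape routes. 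With only $k=c(G-v)$ cops (in particular $k=1$) this is fatal; the standard repair uses a \emph{second} cop that marches to $v$ while the first one guards $u$, and therefore yields only $c(G)\le\max\bigl(c(G-v),2\bigr)$, not $c(G)\le c(G-v)$.

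This gap is not closable, because the first bullet as stated is actually false: take $G=C_4$ and let $u,v$ be antipodal (hence non-adjacent) vertices. Then $N(v)\subseteq N[u]$ and $c(G)=2\ge 2$, yet $c(G-v)=c(P_3)=1$. What is true --- and what your two directions, with the guard-plus-second-cop fix, do prove --- is the sandwich $c(G-v)\le c(G)\le\max\bigl(c(G-v),2\bigr)$, i.e.\ the lemma with the case hypotheses placed on $c(G-v)$ rather than on $c(G)$: if $c(G-v)\ge 2$ then $c(G)=c(G-v)$, and if $c(G-v)=1$ then $c(G)\in\{1,2\}$. This corrected form is also exactly what the paper's application requires: if a $13$-vertex $4K_1$-free graph $G$ has a dominated vertex $v$, then $G-v$ is a $12$-vertex $4K_1$-free graph, so $c(G-v)\le 2$ by Theorem~\ref{lowerbound-12}, whence $c(G)\le\max\bigl(c(G-v),2\bigr)=2$. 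So keep your second paragraph, replace the endgame of your third paragraph by ``one cop holds $u$ while a second cop walks to $v$,'' and restate the lemma with the roles of $c(G)$ and $c(G-v)$ swapped in the two cases.
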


And the following simple result will also be used. Notice that the connectedness is not required

\begin{lemma}\label{add-1}
    Let $G$ be a graph and $u\in V(G)$. Then, $c(G)\leq c(G-u)+1$. 
\end{lemma}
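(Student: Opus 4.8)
The plan is to use one extra cop as a stationary ``guard'' on $u$ and to let the remaining $c(G-u)$ cops win inside $G-u$, where the robber will be trapped. Set $k = c(G-u)$ and describe a strategy for $k+1$ cops on $G$: one designated cop is placed on $u$ at the start and never moves, while the other $k$ cops will run a winning strategy on $G-u$. Since the robber never voluntarily shares a vertex with a cop (doing so ends the game in the cops' favour on the next cop move), the robber never occupies $u$. Consequently it can never traverse $u$, so once it has chosen a starting vertex $r \neq u$ it is confined for the entire game to the connected component of $G-u$ containing $r$; call this component $H$.

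First I would argue the confinement carefully and set up the simulation. Because $H$ is a connected component of $G-u$, no vertex outside $V(H)\cup\{u\}$ is adjacent in $G$ to a vertex of $H$, so $H = G[V(H)]$ and every edge of $G$ leaving $H$ has $u$ as an endpoint; as the guard permanently occupies $u$, any robber move off $H$ would place it on $u$ and lose. Thus all legal robber moves use edges of $H$. Now let $\sigma$ be a winning strategy for $c(H) \le k$ cops on $H$, with prescribed initial positions $p_1,\dots,p_{c(H)} \in V(H)$; we have enough of our $k$ cops, and any surplus cops may simply be parked on $u$ with the guard. Since $G$ is connected, these cops can walk to $p_1,\dots,p_{c(H)}$ in finitely many rounds, using edges of $G$ and passing through $u$ if necessary (several cops may share a vertex). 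During this repositioning the guard stays on $u$, so the robber remains trapped in $H$; when the cops reach $p_1,\dots,p_{c(H)}$ the robber sits at some $r' \in V(H)$ and it is the cops' turn. This is exactly the initial configuration of $\sigma$'s game, so from here the cops execute $\sigma$ and capture the robber (and if a cop captures it earlier during repositioning, so much the better). This gives $c(G) \le k+1$ when $G$ is connected.

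Finally I would remove the connectedness assumption using the definition $c(\cdot) = \max$ over components. Let $G_1,\dots,G_t$ be the components of $G$ with $u \in V(G_1)$. The components of $G-u$ are those of $G_1-u$ together with $G_2,\dots,G_t$, so $c(G-u) = \max\bigl(c(G_1-u),\, c(G_2),\dots,c(G_t)\bigr)$. By the connected case, $c(G_1) \le c(G_1-u)+1 \le c(G-u)+1$, while $c(G_i) \le c(G-u)$ for each $i \ge 2$. Taking the maximum yields $c(G) = \max_i c(G_i) \le c(G-u)+1$.

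The only genuinely delicate point is the repositioning step: one must verify that the cops can always reach the prescribed starting configuration of $\sigma$ without the robber exploiting the transition, and that the turn parity after repositioning matches the start of $\sigma$. Both follow from the guard keeping the robber confined to $H$ and from the fact that cop moves are independent of the robber's position, so I expect this to be the main (though still routine) obstacle; everything else is bookkeeping with the disconnected definition of the cop number.
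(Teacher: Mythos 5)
Your proof is correct. The paper actually states this lemma without any proof at all (it is introduced only as a ``simple result,'' with the remark that connectedness is not required), so there is nothing to compare against except the intended folklore argument, which is exactly what you give: a stationary guard on $u$ confines the robber to its component of $G-u$, the remaining cops reposition and then simulate a winning strategy there, and the turn parity works out since after repositioning it is the cops' move, matching the state of the simulated game just after the robber's initial placement. Your final paragraph, reducing the general case to the connected case via the definition $c(G)=\max_i c(G_i)$, is precisely the content of the paper's remark that connectedness is not needed, so your write-up in fact supplies the justification the paper omits.
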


\begin{lemma}\label{3K1-free}
    Any graph $G$ on $6$ vertices containing no induced $3K_1$ subgraph must contain one of the following as a (not necessarily induced) subgraph:
    \begin{itemize}
        \item $K_5 + K_1$;
        \item $K_4 + K_2$;
        \item $K_3 + K_3$;
        \item the blown-up cycle $B_6$ (see Figure~\ref{fig:B6}).
    \end{itemize}
    
    \begin{figure}[htbp]\label{fig:B6}
        \centering
        \begin{tikzpicture}[
            vertex/.style = {draw, circle, inner sep=1.5pt, minimum size=5mm},
            edge/.style = {thick}
        ]
            \foreach \i in {1,...,5} {
                \node[vertex] (u\i) at (90+72-\i*72:1.5) {};
            }
            \node[vertex] (v) at (0:0) {};
            
            \foreach \i [remember=\i as \last (initially 5)] in {1,...,5} {
                \draw[edge] (u\last) -- (u\i);
            }
            \foreach \i in {1,2,5} {
                \draw[edge] (v) -- (u\i);
            }
        \end{tikzpicture}
        \caption{The graph $B_6$. }\label{figure:B6}
    \end{figure}
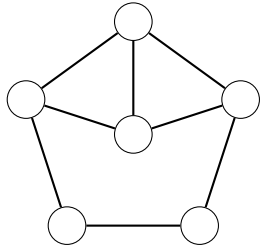
\end{lemma}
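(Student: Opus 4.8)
The plan is to pass to the complement. Write $\overline{G}$ for the complement of $G$. The hypothesis that $G$ has no induced $3K_1$ is exactly $\alpha(G)\le 2$, which is equivalent to saying that $\overline{G}$ is triangle-free. Moreover, since all four target graphs have exactly $6$ vertices, the assertion ``$G$ contains $F$ as a (not necessarily induced) subgraph'' is, for such an $F$, equivalent to ``$\overline{G}$ is a subgraph of $\overline{F}$'' under a common vertex bijection, because a clique of $G$ is an independent set of $\overline{G}$ and, more generally, $F\subseteq G$ if and only if $\overline{G}\subseteq\overline{F}$. I will use this duality throughout, and split the argument according to whether $\overline{G}$ is bipartite.

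Suppose first that $\overline{G}$ is bipartite, and fix a bipartition $V(G)=X\cup Y$ into two independent sets of $\overline{G}$. Then $X$ and $Y$ are cliques of $G$ with $|X|+|Y|=6$, so up to swapping their sizes are one of $(6,0)$, $(5,1)$, $(4,2)$, $(3,3)$. In the first two cases $G$ contains $K_5$ and hence $K_5+K_1$; in the case $(4,2)$ the clique on $X$ is a $K_4$ and the clique on $Y$ is a $K_2$, so $G\supseteq K_4+K_2$; and in the case $(3,3)$ the two cliques give $G\supseteq K_3+K_3$. This settles the bipartite case using the first three target graphs.

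Now suppose $\overline{G}$ is not bipartite. Then it contains an odd cycle, and since $\overline{G}$ is triangle-free the shortest odd cycle has length at least $5$; as $\overline{G}$ has only $6$ vertices, it must contain a $5$-cycle, say on $\{1,2,3,4,5\}$. Any chord of this cycle would join two vertices at distance $2$ and create a triangle, so triangle-freeness forces these five vertices to induce exactly $C_5$. The sixth vertex $w$ can be adjacent in $\overline{G}$ only to an independent set of this cycle, hence to at most two vertices, and when to two, to a pair at distance $2$. A direct computation identifies $\overline{B_6}$ as precisely $C_5$ together with one extra vertex joined to two cycle-vertices at distance $2$ (it is triangle-free with $7$ edges). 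Consequently, in each of the three possibilities for $w$ (adjacent to $0$, $1$, or $2$ cycle-vertices) the graph $\overline{G}$ embeds as a subgraph of $\overline{B_6}$, and by the complement duality this is exactly the statement $B_6\subseteq G$, the fourth target graph.

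I expect the main obstacle to be the bookkeeping in the non-bipartite case: one must verify the explicit isomorphism type of $\overline{B_6}$ as a $C_5$ with a vertex attached to a distance-$2$ pair, and then check that each of the three attachment patterns for $w$ genuinely embeds into it, while keeping the ``subgraph versus induced subgraph'' and complementation directions consistent. By contrast, the bipartite case is essentially immediate once the clique partition is read off from the bipartition, so the whole difficulty is concentrated in handling the single non-bipartite family that produces $B_6$.
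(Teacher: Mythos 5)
Your proof is correct and follows essentially the same route as the paper's: pass to the triangle-free complement, split on bipartiteness, extract an induced $C_5$ in the non-bipartite case, and analyze the sixth vertex. The only difference is cosmetic: you finish the non-bipartite case entirely inside the complement (showing $\overline{G}\subseteq\overline{B_6}$ and dualizing), whereas the paper translates back to $G$ and argues via $d_G(v)\ge 3$ with the cases $d_G(v)\ge 4$ and $d_G(v)=3$; the underlying content is identical.
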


\begin{proof}
    If the complement $\overline{G}$ is bipartite, then $G$ contains one of $K_5+K_1$, $K_4+K_2$, or $K_3+K_3$. We therefore assume $\overline{G}$ is not bipartite. Since $\overline{G}$ is triangle-free, any odd cycle in it must be of length $5$ and induced. Consequently, $\overline{G}$ contains an induced $C_5$, which implies that $G$ itself has an induced subgraph $H$ isomorphic to $C_5$.
    
    Let $v$ be the unique vertex in $V(G)\setminus V(H)$. It is straightforward to verify that $d_G(v) \geq 3$. If $d_G(v) \geq 4$, then $G$ clearly contains $B_6$ as a subgraph. If $d_G(v)=3$, then $G$ is isomorphic to $B_6$ by $G\in\mathsf{Forb}(3K_1)$.
\end{proof}

In what follows, we prove \cref{main-thm}. Let $G$ be a connected graph in $\mathsf{Graph}(13) \cap \mathsf{Forb}(4K_1)$. To establish $c(G) \leq 2$, choose a vertex $u$ with $d(u) = \Delta(G)$.

If $d(u) \geq 7$, then Theorem~\ref{max-n-6} directly gives $c(G) \leq 2$. We may therefore assume $\Delta(G) = d(u) \leq 6$ for the remainder of the proof.

The proof is organized as follows. The case $d(u) = 6$ requires a detailed analysis and is treated across the next few sections. All remaining cases—namely, those with $d(u) \leq 5$ together with any sub‑cases left open from the analysis of $d(u) = 6$—are handled computationally in \cref{computation}.

\section{The induced subgraph $G-N[u]$}

We denote the induced subgraph $G - N[u]$ by $H$. By \cref{3K1-free}, it suffices to consider whether $c(G) > 2$ is possible in the following cases:
\begin{itemize}
    \item $H$ contains $K_5 + K_1$;
    \item $H$ contains $K_4 + K_2$;
    \item $H$ contains $K_3 + K_3$;
    \item $H$ contains $B_6$.
\end{itemize}

Let $C_1$ and $C_2$ be the two cops and $R$ the robber. For $P \in \{C_1, C_2, R\}$, denote the position of $P$ after the $i$-th round by $p_i(P)$. By \cref{corner}, we may assume without loss of generality that if $x, y \in V(G)$ satisfy $N(x) \subseteq N[y]$, then $x = y$. Throughout the following discussion, we work under the following set of assumptions:

\noindent ($\ast$) All of the following hold:
\begin{itemize}
    \item $c(G) \geq 3$;
    \item $G$ is connected and $4K_1$-free;
    \item $\Delta(G) \leq 6$;
    \item $c(H) \geq 2$;
    \item For all $x, y \in V(G)$ with $N(x) \subseteq N[y]$, we have $x = y$.
\end{itemize}

When discussing a position $p_i(R)$, we implicitly assume the game has not yet reached a cop-win state. Our goal in this section is to prove that $H$ must be isomorphic to either $B_6$ or the triangular prism $T_6$ (see Figure~\ref{fig:T6}). 

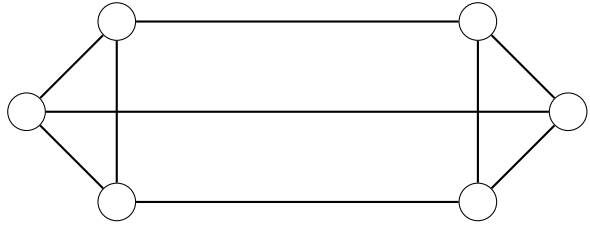
\begin{figure}[htbp]
    \centering
    \begin{tikzpicture}[
        vertex/.style = {draw, circle, inner sep=1.5pt, minimum size=5mm},
        edge/.style = {thick},
        scale=1.2
    ]
        \node[vertex] (a1) at (-2, 1) {};
        \node[vertex] (b1) at (-2, -1) {};
        \node[vertex] (c1) at (-3, 0) {};
        \node[vertex] (a2) at (2, 1) {};
        \node[vertex] (b2) at (2, -1) {};
        \node[vertex] (c2) at (3, 0) {};
        
        \draw[edge] (a1) -- (b1) -- (c1) -- (a1);
        \draw[edge] (a2) -- (b2) -- (c2) -- (a2);
        
        \draw[edge] (a1) -- node[above] {} (a2);
        \draw[edge] (b1) -- node[above] {} (b2);
        \draw[edge] (c1) -- node[above] {} (c2);
    \end{tikzpicture}
    \caption{The triangular prism $T_6$. }\label{fig:T6}
\end{figure}

The case analysis in the preceding subsections involves considerable overlap in the reasoning patterns. However, due to the nuanced variations in each specific configuration, we have chosen to present the arguments within their specific contexts rather than condensing them into a few overarching lemmas. This approach preserves the clarity of the proof for each case.

\subsection{Containing $K_5+K_1$}

By \cref{add-1} we have $c(G) \leq c(H) + 1$. If $H$ contains $K_5 + K_1$, under the assumption $c(G) \geq 3$, this forces $c(H) \geq 2$.

Now, since $H$ contains $K_5 + K_1$ as a subgraph and satisfies $\gamma(H) > 1$, the structure of $H$ is forced: $H$ must be isomorphic to $K_5 + K_1$ itself (or one cop can dominate the $K_5$ component and control the isolated vertex). This contradicts the requirement $c(H) \geq 2$. Consequently, the assumption $c(G) \geq 3$ is false, and we conclude $c(G) \leq 2$.

\subsection{Containing $K_4+K_2$}\label{K4+K2}


Let $V(H) = \{v, w, x, y, s, t\}$ such that $H[\{v, w, x, y\}] \cong K_4$ and $(s,t)$ is an edge of $H$.

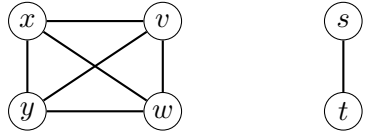
\begin{figure}[htbp]
    \centering
    \begin{tikzpicture}[
        vertex/.style = {draw, circle, inner sep=1.5pt, minimum size=5mm},
        edge/.style = {thick},
        scale=1.2
    ]
        \node[vertex] (x) at (0,1) {$x$};
        \node[vertex] (v) at (1.5,1) {$v$};
        \node[vertex] (y) at (0,0) {$y$};
        \node[vertex] (w) at (1.5,0) {$w$};
        \node[vertex] (s) at (3.5, 1) {$s$};
        \node[vertex] (t) at (3.5, 0) {$t$};
        
        \draw[edge] (v) -- (w) -- (y) -- (x) -- (v);
        \draw[edge] (v) -- (y);
        \draw[edge] (w) -- (x);
        \draw[edge] (s) -- (t);
    \end{tikzpicture}
    \caption{The graph $H$ with a $K_4$ on $\{v,w,x,y\}$ and an edge $(s,t)$.}\label{fig:H-K4-K2-initial}
\end{figure}

If all edges between $\{v,w,x,y\}$ and $\{s,t\}$ were incident to a single vertex, then $c(H)=1$, contradicting our assumption that $c(H) \ge 2$. Therefore, the edge set $E(\{v,w,x,y\},\{s,t\})$ must contain two non‑adjacent edges. Without loss of generality, we assume $(v,s), (w,t) \in E(H)$.

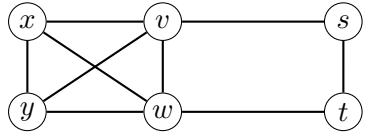
\begin{figure}[htbp]
    \centering
    \begin{tikzpicture}[
        vertex/.style = {draw, circle, inner sep=1.5pt, minimum size=5mm},
        edge/.style = {thick},
        scale=1.2
    ]
        \node[vertex] (x) at (0,1) {$x$};
        \node[vertex] (v) at (1.5,1) {$v$};
        \node[vertex] (y) at (0,0) {$y$};
        \node[vertex] (w) at (1.5,0) {$w$};
        \node[vertex] (s) at (3.5, 1) {$s$};
        \node[vertex] (t) at (3.5, 0) {$t$};
        
        \draw[edge] (v) -- (w) -- (y) -- (x) -- (v);
        \draw[edge] (v) -- (y);
        \draw[edge] (w) -- (x);
        \draw[edge] (s) -- (t);
        \draw[edge] (v) -- (s);
        \draw[edge] (w) -- (t);
    \end{tikzpicture}
    \caption{The graph $H$ with the non‑adjacent crossing edges $(v,s)$ and $(w,t)$ added.}\label{fig:H-K4-K2-crossing}
\end{figure}

Next, we prove a useful lemma.

\begin{lemma}\label{at-least-two}
    Assume that at some round $i$, we have $p_i(C_1) \in \{v,w,x,y\}$ and $p_i(C_2) \in N(u)$. Then, $p_i(R) \in \{s,t\}$ or $p_i(R) \in N(u) \cap N(s) \cap N(t)$.
\end{lemma}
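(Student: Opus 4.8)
The plan is to combine a standard ``safe position'' principle for the evading robber with the $4K_1$-freeness of $G$. Since assumption $(\ast)$ gives $c(G)\ge 3$, two cops can never capture the robber; in particular, at the end of any round the robber must occupy a vertex that is \emph{not} dominated by either cop, for otherwise the adjacent cop would capture it on the next move (recall the cops move first). Thus I may assume throughout that $p_i(R)\notin N[p_i(C_1)]\cup N[p_i(C_2)]$.

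Granting this, I first eliminate most candidate positions for $p_i(R)$. Because $p_i(C_2)\in N(u)$ we have $u\in N(p_i(C_2))$, so $p_i(R)\neq u$. Because $p_i(C_1)\in\{v,w,x,y\}$ and $H[\{v,w,x,y\}]\cong K_4$, all four of $v,w,x,y$ lie in $N[p_i(C_1)]$, so $p_i(R)\notin\{v,w,x,y\}$. Since $V(G)=\{u\}\cup N(u)\cup\{v,w,x,y,s,t\}$, this leaves only $p_i(R)\in\{s,t\}$ (the first alternative of the conclusion) or $p_i(R)=r$ for some $r\in N(u)$. It then remains to show that in the latter case $r\in N(s)\cap N(t)$.

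For this remaining case I argue by contradiction, and by symmetry (the edges $vs$ and $wt$ make $s$ and $t$ interchangeable after swapping $v\leftrightarrow w$) I may assume $r\not\sim s$. The engine is to produce an induced $4K_1$, which is impossible since $\alpha(G)\le 3$. The robber's safety supplies two non-neighbours of $r$ for free, namely $C_1\eqdef p_i(C_1)\in\{v,w,x,y\}$ and $C_2\eqdef p_i(C_2)\in N(u)$; since also $r\not\sim s$ by assumption, the quadruple $\{r,s,C_1,C_2\}$ is independent unless one of the three edges $sC_1$, $sC_2$, $C_1C_2$ is present (note the obvious alternative fourth vertex $u$ is unavailable, as $u\sim r$). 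I would then rule out each surviving configuration in turn, using the remaining structure: the known edges $st$, $vs$, $wt$ of $H$; the degree bound $\Delta(G)\le 6$, which in the main case $d(u)=6$ forces $|N(u)|=|V(H)|=6$ and limits how many neighbours $s$ and $t$ can have in $N(u)$; and the minimality condition from $(\ast)$ that no vertex has its neighbourhood contained in the closed neighbourhood of another. When the obstruction is the edge $C_1C_2$ or $sC_2$, I would replace $C_2$ by a different non-neighbour of $r$ in $N(u)$, or replace $C_1$ by a $K_4$-vertex non-adjacent to $s$, to rebuild an independent quadruple.

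I expect the main obstacle to be precisely this last case analysis: unlike the clean elimination of $u$ and of the $K_4$, pinning down $r\sim s$ and $r\sim t$ requires controlling the a priori unknown adjacencies between $N(u)$ and $\{s,t\}$ and between $\{s,t\}$ and the $K_4$. The key leverage will be that any independent triple drawn from the three ``independent-friendly'' regions $\{s,t\}$, $\{v,w,x,y\}$, and $N(u)$ extends, via the safe robber $r$, to a forbidden $4K_1$; making this exhaustive, and verifying that the degree and minimality constraints always supply a legitimate fourth vertex, is the delicate part.
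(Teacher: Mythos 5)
Your reduction to the case $p_i(R)\in N(u)$ is correct and is the same as the paper's first step: since the cops move first and $c(G)\ge 3$ lets the robber play a strategy that is never captured, we have $p_i(R)\notin N[p_i(C_1)]\cup N[p_i(C_2)]$, which rules out $u$ and the four $K_4$-vertices and leaves $\{s,t\}\cup N(u)$. The gap is in the second half. You try to deduce $r:=p_i(R)\in N(s)\cap N(t)$ \emph{statically}, from $4K_1$-freeness, the degree bound, the minimality condition in $(\ast)$, and the non-adjacencies $r\not\sim C_1$, $r\not\sim C_2$, $r\not\sim s$. But the lemma's conclusion is genuinely game-theoretic: the paper proves it by a one-move lookahead that invokes the evasion hypothesis \emph{two more times}. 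If the cops next play $(p_{i+1}(C_1),p_{i+1}(C_2))=(v,u)$, then $N[v]\cup N[u]$ covers every vertex of $G$ except $t$, so a robber who is to keep evading must be able to step onto $t$, i.e.\ $r\in N(t)$; the symmetric move $(w,u)$ forces $r\in N(s)$. Your proposal uses $c(G)\ge 3$ only once, for the current position, and then argues purely structurally; that structural information is insufficient.

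In fact the static implication you aim to prove is false, so no completion of your case analysis can exist. Take $V(G)=\{u\}\cup N(u)\cup V(H)$ with $N(u)=\{r,c_2,a_1,a_2,a_3,a_4\}$ inducing exactly the two disjoint triangles $\{r,a_1,a_2\}$ and $\{c_2,a_3,a_4\}$; let $H$ be exactly the configuration of the lemma ($K_4$ on $\{v,w,x,y\}$ plus the edges $st$, $vs$, $wt$ and nothing else); and join $N(u)$ to $H$ by $r\sim y,t$; $a_1\sim v,t$; $a_2\sim w,t$; $c_2\sim x,y,s$; $a_3\sim s,t$; $a_4\sim w,x,s$. Then $\Delta(G)=6$, $G$ is connected, $H$ dismantles to $C_4$ so $c(H)\ge 2$, and one checks that no vertex has its neighbourhood contained in another's closed neighbourhood. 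Moreover $G$ is $4K_1$-free: $\alpha(H)=\alpha(G[N(u)])=2$ and $u$ lies in no independent $4$-set, so any such set would split $2{+}2$; every independent pair in $H$ contains exactly one of $s,t$, every independent pair in $N(u)$ meets both triangles, and all of $\{c_2,a_3,a_4\}$ are adjacent to $s$ while all of $\{r,a_1,a_2\}$ are adjacent to $t$, so a crossing edge always exists. Now put $C_1=x$ and $C_2=c_2$: the vertex $r$ satisfies $r\notin N[C_1]\cup N[C_2]$, yet $r\not\sim s$. The only common non-neighbours of $r$ and $s$ are $w$ and $x$, which are adjacent, so no induced $4K_1$ contains both $r$ and $s$ --- your ``engine'' has nothing to run on, and no replacement of $C_1$ or $C_2$ can help. (This graph of course has $c(G)\le 2$, so the lemma itself is not contradicted; but your argument never uses $c(G)\ge 3$ after the first step, hence it would apply verbatim to this graph and prove a false statement.) The repair is precisely the paper's lookahead argument described above.
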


\begin{proof}
    If $p_i(R) \notin \{s,t\}$, then $p_i(R) \in N(u)$. Consider two possible moves for the cops in the next round:
    \begin{itemize}
        \item Set $p_{i+1}(C_1) = v$ and $p_{i+1}(C_2) = u$. Then, the robber is forced to $p_{i+1}(R)=t$, implying $p_i(R) \in N(t)$.
        \item Set $p_{i+1}(C_1) = w$ and $p_{i+1}(C_2) = u$. Then, the robber is forced to $p_{i+1}(R)=s$, implying $p_i(R) \in N(s)$.
    \end{itemize}
    Hence $p_i(R) \in N(u) \cap N(s) \cap N(t)$.
\end{proof}

If $N(s) \cap N(u) = \emptyset$, then $N(s) \subseteq N[w]$, which contradicts our standing assumption ($\ast$). We now show that $|N(u) \cap N(s) \cap N(t)| \ge 2$. Suppose, for a contradiction, that $|N(u) \cap N(s) \cap N(t)| \le 1$. Place the cops initially as $p_1(C_1)=w$ and choose $p_1(C_2) \in N(s) \cap N(u)$ such that $N(u) \cap N(s) \cap N(t) \subseteq \{p_1(C_2)\}$. Then, $p_1(R) \notin \{s,t\}$, and by Lemma~\ref{at-least-two} we must have $p_1(R) \in N(u) \cap N(s) \cap N(t)$. Consequently $p_1(R) = p_1(C_2)$, an immediate capture, contradicting the assumption that the robber can avoid capture. Therefore $|N(u) \cap N(s) \cap N(t)| \ge 2$.

Notice that $|N(u)\cap N(s)\cap N(t)|\leq |N(s)|-|N(s)\cap H|\leq 4$. We proceed by considering different cases depending on the value of $|N(u)\cap N(s)\cap N(t)|$.

\subsubsection{The case $|N(u)\cap N(s)\cap N(t)|=2$}
Assume that $|N(u) \cap N(s) \cap N(t)| = 2$, and let $\{a, b\} = N(u) \cap N(s) \cap N(t)$. 

If $N(a) \cap \{v,w,x,y\} \neq \emptyset$, we may place a cop $p_1(C_1)$ in $N(a) \cap \{v,w,x,y\}$ and set $p_1(C_2) = b$. Applying Lemma~\ref{at-least-two} then yields a contradiction. Hence, $N(a) \cap \{v,w,x,y\} = \emptyset$, and by symmetry, $N(b) \cap \{v,w,x,y\} = \emptyset$ as well.

Now, set $p_1(C_1) = t$ and $p_1(C_2) = u$. This forces $p_1(R) \in \{v,x,y\}$. In the next round, move to $p_2(C_1) = w$ and $p_2(C_2) = a$. By Lemma~\ref{at-least-two}, we must have $p_2(R) \in \{s,t,a,b\}$. The only feasible possibility is $p_2(R) = b$. However, since $N[b] \cap \{v,x,y\} = \emptyset$, the robber cannot have moved from $\{v,x,y\}$ to $b$, a contradiction.

\subsubsection{The case $|N(u)\cap N(s)\cap N(t)|=3$}
Assume that $|N(u) \cap N(s) \cap N(t)| = 3$. 

First, suppose there exist at least two edges inside $N(u) \cap N(s) \cap N(t)$. Then, we can choose $p_1(C_1) = v$ and select $p_1(C_2) \in N(u) \cap N(s) \cap N(t)$ such that $N(u) \cap N(s) \cap N(t) \subseteq N[p_1(C_2)]$. By Lemma~\ref{at-least-two}, this leads to a contradiction because $(N(u) \cap N(s) \cap N(t)) \cup \{s,t\} \subseteq N[p_1(C_1)] \cup N[p_1(C_2)]$ would force an immediate capture. Therefore, there can be at most one edge within $N(u) \cap N(s) \cap N(t)$.

Next, consider the case with exactly one such edge. Write $N(u) \cap N(s) \cap N(t) = \{a, b, c\}$, where $(b,c) \in E(G)$ and $(a,b), (a,c) \notin E(G)$. If $N(a) \cap \{v,w,x,y\} \neq \emptyset$, placing $p_1(C_1)$ in this intersection and setting $p_1(C_2)=b$ gives $\{s,t\} \cup (N(u) \cap N(s) \cap N(t)) \subseteq N[p_1(C_1)] \cup N[p_1(C_2)]$, which again contradicts Lemma~\ref{at-least-two}. Thus, $N(a) \cap \{v,w,x,y\} = \emptyset$.

Now, start with $p_1(C_1)=s$ and $p_1(C_2)=u$, forcing $p_1(R) \in \{w,x,y\}$. Then, set $p_2(C_1)=v$ and $p_2(C_2)=b$. By Lemma~\ref{at-least-two}, we have $p_2(R) \in \{s,t,a,b,c\}$, which implies $p_2(R)=a$. Consequently, $p_1(R) \in N(a) \cap \{v,w,x,y\}$, contradicting the earlier conclusion that this intersection is empty.

Finally, suppose there are no edges inside $N(u) \cap N(s) \cap N(t)$. We claim that there cannot exist two distinct vertices $p,q \in \{v,w,x,y\}$ and two distinct vertices $a,b \in N(u) \cap N(s) \cap N(t)$ such that $(p,a), (q,b) \in E(G)$. Indeed, if such a configuration existed, let $N(u) \cap N(s) \cap N(t) = \{a,b,c\}$ and set $p_1(C_1)=p$ and $p_1(C_2)=c$. By Lemma~\ref{at-least-two}, we get $p_1(R) \in \{s,t,a,b,c\}$, so necessarily $p_1(R)=b$. Then, taking $p_2(C_1)=q$ and $p_2(C_2)=c$ forces $p_2(R)=a$. But since $(a,b) \notin E(G)$, the robber cannot move from $b$ to $a$, a contradiction.

Because $G$ is $4K_1$-free, every vertex in $\{v,w,x,y\}$ must be adjacent to at least one vertex in $N(u) \cap N(s) \cap N(t)$. By the claim above, they must all be adjacent to the *same* vertex, say $a$. This implies $d(a) \ge 1 + |V(H)| = 7$, contradicting the assumption $\Delta(G) \le 6$.

\subsubsection{The case $|N(u)\cap N(s)\cap N(t)|=4$}
In this case, we deduce that $N(s) = \{v,t\} \cup (N(u) \cap N(s) \cap N(t))$ and $N(t) = \{w,s\} \cup (N(u) \cap N(s) \cap N(t))$.

Begin with $p_1(C_1)=w$ and $p_1(C_2)=u$, which forces $p_1(R)=s$. If $N(u) \cap N(v) \neq \emptyset$, we may set $p_2(C_1)=t$ and choose $p_2(C_2) \in N(u) \cap N(v)$, resulting in the capture of $R$—a contradiction. Hence, $N(u) \cap N(v) = \emptyset$.

Now, set $p_2(C_1)=t$ and $p_2(C_2) \in N(u) \cap N(s) \cap N(t)$. This forces $p_2(R)=v$. In the following round, take $p_3(C_1)=w$ and $p_3(C_2)=s$. The robber is now captured, a final contradiction.

\subsection{Containing $K_3+K_3$}
Let $S_1$ and $S_2$ be two disjoint subsets of $V(H)$ with $|S_1|=|S_2|=3$, such that $G[S_1]$ and $G[S_2]$ are both isomorphic to $K_3$. We consider cases based on the size of the edge cut $|E(S_1, S_2)|$.

\subsubsection{The case $|E(S_1,S_2)|\leq 2$}
If $|E(S_1, S_2)| \le 2$, the structure of $H$ is forced. Any other configuration would imply $c(H)=1$, contradicting our assumptions. The only remaining possibility is that $H$ is isomorphic to the graph in Figure~\ref{fig:K3K3-small-cut}, where $S_1 = \{s, v, x\}$, $S_2 = \{w, y, t\}$, and $E(S_1, S_2) = \{(v,w), (x,y)\}$.

\begin{figure}[htbp]
    \centering
    \begin{tikzpicture}[
        vertex/.style = {draw, circle, inner sep=1.5pt, minimum size=5mm},
        edge/.style = {thick},
        scale=0.9
    ]
        \node[vertex] (s) at (0, -1) {$s$};
        \node[vertex] (v) at (1.5, 0.5) {$v$};
        \node[vertex] (x) at (1.5, -2.5) {$x$};
        \node[vertex] (w) at (4, 0.5) {$w$};
        \node[vertex] (t) at (5.5, -1) {$t$};
        \node[vertex] (y) at (4, -2.5) {$y$};
        
        \draw[edge] (s) -- (v) -- (x) -- (s);
        \draw[edge] (w) -- (t) -- (y) -- (w);
        \draw[edge] (v) -- (w);
        \draw[edge] (x) -- (y);
    \end{tikzpicture}
    \caption{The graph $H$ when $H[S_1] \cong H[S_2] \cong K_3$ and $|E(S_1, S_2)| = 2$.}\label{fig:K3K3-small-cut}
\end{figure}
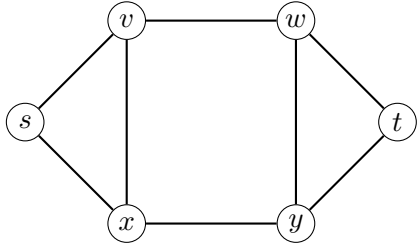

We first prove that $N(y) \cap N(u) \neq \emptyset$. Suppose, for a contradiction, that $N(y) \cap N(u) = \emptyset$. Place the cops as $p_1(C_1)=v$ and $p_1(C_2)=u$. For the robber to avoid immediate capture, we must have $p_1(R) \in \{y, t\}$.
If $p_1(R) = t$, then we take $p_2(C_1)=w$ while keeping $p_2(C_2)=u$ captures the robber. Thus, $p_1(R) = y$.
Now, keep $p_2(C_1)=v$ and choose $p_2(C_2)$ from the non-empty set $N(u) \cap N(H)$ (non-emptiness follows from the connectivity of $G$). Then, $p_2(R) \in \{y, t\}$. If $p_2(R)=t$, we can capture as before by setting $p_3(C_1)=w$ and $p_3(C_2)=u$. If $p_2(R)=y$, we take $p_3(C_2)$ to be a neighbor of $p_2(C_2)$ in $H$ and set $p_3(C_1) \in \{v, w\}$ so that the two cops lie in different sets between $S_1$ and $S_2$. Since $N(y) \cap N(u) = \emptyset$, the robber has no safe move and is captured---a contradiction. Hence, $N(y) \cap N(u) \neq \emptyset$.

Next, we show that $|(N(u) \cap N(w) \cap N(x) \cap N(y)) \setminus N(v)| \ge 2$. Assume the contrary. Choose $p_1(C_1)=v$ and select $p_1(C_2) \in N(u) \cap N(y)$ such that $(N(u) \cap N(w) \cap N(x) \cap N(y)) \setminus N(v) \subseteq \{p_1(C_2)\}$.
Then, $p_1(R) \in N(u) \setminus N(v)$; indeed, if $p_1(R)=t$, placing $p_2(C_1)=w$ and $p_2(C_2)=u$ would capture the robber. We now examine three distinct cop strategies:
\begin{itemize}
    \item Set $p_2(C_1)=v$ and $p_2(C_2)=u$. Then, $p_2(R) \in \{y,t\}$. If $p_2(R)=t$, we capture by moving $C_1$ to $w$; therefore $p_2(R)=y$, implying $p_1(R) \in N(y)$.
    \item Set $p_2(C_1)=x$ and $p_2(C_2)=u$. Then, $p_2(R) \in \{w,t\}$. If $p_2(R)=t$, we capture by moving $C_1$ to $y$; therefore $p_2(R)=w$, implying $p_1(R) \in N(w)$.
    \item Set $p_2(C_1)=w$ and $p_2(C_2)=u$. Then, $p_2(R) \in \{x,s\}$. If $p_2(R)=s$, we capture by moving $C_1$ to $v$; therefore $p_2(R)=x$, implying $p_1(R) \in N(x)$.
\end{itemize}
Combining these, we get $p_1(R) \in (N(u) \cap N(w) \cap N(x) \cap N(y)) \setminus N(v)$, which forces $p_1(R) = p_1(C_2)$, an immediate contradiction.

By symmetry, the following lower bounds also hold:
\[
\begin{aligned}
&|(N(u) \cap N(w) \cap N(x) \cap N(v)) \setminus N(y)| \ge 2, \\
&|(N(u) \cap N(w) \cap N(v) \cap N(y)) \setminus N(x)| \ge 2, \\
&|(N(u) \cap N(v) \cap N(x) \cap N(y)) \setminus N(w)| \ge 2.
\end{aligned}
\]
These four sets are disjoint subsets of $N(u)$, which would imply $|N(u)| \ge 8$, contradicting the degree condition $\Delta(G) \le 6$. This completes the analysis for $|E(S_1,S_2)| \le 2$.


\subsubsection{The case $|E(S_1,S_2)|=3$}
When $|E(S_1, S_2)| = 3$, we can check that the only possibilities for which $c(H) \ge 2$ are when $H$ is isomorphic to the triangular prism $T_6$ (see Figure~\ref{fig:T6}) or to the graph depicted in Figure~\ref{fig:K3K3-three-edges}. We now focus on the latter case.

Let $S_1 = \{v, x, y\}$ and $S_2 = \{s, t, z\}$, with the crossing edges $E(S_1, S_2) = \{(v,s), (v,t), (y,z)\}$.

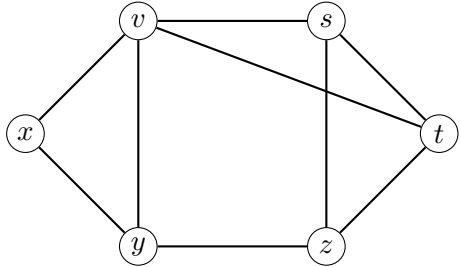
\begin{figure}[htbp]
    \centering
    \begin{tikzpicture}[
        vertex/.style = {draw, circle, inner sep=1.5pt, minimum size=5mm},
        edge/.style = {thick},
        scale=1.0
    ]
       \node[vertex] (x) at (0, -1) {$x$};
        \node[vertex] (v) at (1.5, 0.5) {$v$};
        \node[vertex] (y) at (1.5, -2.5) {$y$};
        \node[vertex] (s) at (4, 0.5) {$s$};
        \node[vertex] (t) at (5.5, -1) {$t$};
        \node[vertex] (z) at (4, -2.5) {$z$};
        
        \draw[edge] (v) -- (x) -- (y) -- (v);
        \draw[edge] (s) -- (t) -- (z) -- (s);
        \draw[edge] (v) -- (s);
        \draw[edge] (v) -- (t);
        \draw[edge] (y) -- (z);
    \end{tikzpicture}
    \caption{A second graph $H$ with $|E(S_1, S_2)| = 3$ and $c(H) \ge 2$.}\label{fig:K3K3-three-edges}
\end{figure}

First, we prove that $N(y) \cap N(u) \neq \emptyset$. Suppose otherwise. Place the cops as $p_1(C_1)=s$ and $p_1(C_2)=u$. Then, the robber must start in $p_1(R) \in \{x, y\}$ to avoid immediate capture.
\begin{itemize}
    \item If $p_1(R)=x$, set $p_2(C_1)=v$ and $p_2(C_2)=u$. The robber is now trapped and captured.
    \item If $p_1(R)=y$, keep $p_2(C_1)=s$ and choose $p_2(C_2)$ from the non-empty set $N(u) \cap N(H)$. The robber must remain at $y$; moving to $x$ would allow capture by moving $C_1$ to and $C_2$ to $v$. In the next round, move $p_3(C_2)$ to a neighbor in $H$ of $p_2(C_2)$ and set $p_3(C_1) \in \{v, s\}$ so that the two cops lie in different sets from $S_1$ and $S_2$. With $N(y) \cap N(u) = \emptyset$, the robber has no safe move and is captured.
\end{itemize}
Both possibilities lead to a contradiction, so $N(y) \cap N(u)$ is non-empty.

Next, we claim and prove that $|N(u) \cap N(v) \cap N(y) \cap N(z)| \ge 2$. Assume, for contradiction, that this intersection contains at most one vertex. Choose $p_1(C_1)=s$ and select $p_1(C_2) \in N(u) \cap N(y)$ such that $N(u) \cap N(v) \cap N(y) \cap N(z) \subseteq \{p_1(C_2)\}$.
Then, the robber must choose $p_1(R) \in N(u) \setminus N(s)$. (If $p_1(R)=x$, setting $p_2(C_1)=v$ and $p_2(C_2)=u$ captures the robber.) We analyze three distinct cop strategies:
\begin{itemize}
    \item Set $p_2(C_1)=v$ and $p_2(C_2)=u$. This forces $p_2(R)=z$, implying $p_1(R) \in N(z)$.
    \item Set $p_2(C_1)=s$ and $p_2(C_2)=u$. Then, $p_2(R) \in \{x, y\}$. If $p_2(R)=x$, moving $p_3(C_1)=v$ captures the robber; thus $p_2(R)=y$, implying $p_1(R) \in N(y)$.
    \item Set $p_2(C_1)=z$ and $p_2(C_2)=u$. Then, $p_2(R) \in \{x, v\}$. If $p_2(R)=x$, moving $p_3(C_1)=y$ captures the robber; thus $p_2(R)=v$, implying $p_1(R) \in N(v)$.
\end{itemize}
Combining these implications, we deduce that $p_1(R) \in N(u) \cap N(v) \cap N(y) \cap N(z)$. By our initial choice of $p_1(C_2)$, this forces $p_1(R) = p_1(C_2)$, an immediate capture—a contradiction. Therefore, $|N(u) \cap N(v) \cap N(y) \cap N(z)| \ge 2$.

Since $\Delta(G) \le 6$ and $v$ is adjacent to $x, y, s, t$, the set $N(u) \cap N(v) \cap N(y) \cap N(z)$ can have size at most $2$. Consequently, we have equality: $|N(u)\cap N(v)|=|N(u) \cap N(v) \cap N(y) \cap N(z)| = 2$.

Finally, we derive a contradiction from this precise count. Place $p_1(C_1)=z$ and $p_1(C_2) \in N(u) \cap N(v) \cap N(y) \cap N(z)$. The robber cannot choose $p_1(R)=x$, as then $p_2(C_1)=y$ and $p_2(C_2)=u$ would lead to capture. Hence, $p_1(R) \in N(u) \setminus N(z)\subseteq N(u)\setminus N(v)$.
Now, set $p_2(C_1)=z$ and $p_2(C_2)=u$. This forces $p_2(R)=x$. In the next round, setting $p_3(C_1)=y$ and $p_3(C_2)=u$ captures the robber—a final contradiction. This completes the analysis for this graph structure, leaving $T_6$ as the only remaining candidate when $|E(S_1,S_2)| = 3$.



\subsubsection{The case $|E(S_1,S_2)|=4$, case 1}
When $|E(S_1, S_2)| = 4$, there are two possible graph structures for $H$ that satisfy $c(H) \ge 2$. The first candidate is depicted in Figure~\ref{fig:K3K3-four-edges-case1}.

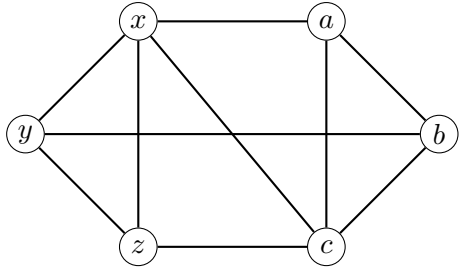
\begin{figure}[htbp]
    \centering
    \begin{tikzpicture}[
        vertex/.style = {draw, circle, inner sep=1.5pt, minimum size=5mm},
        edge/.style = {thick},
        scale=1.0
    ]

        \node[vertex] (y) at (0, -1) {$y$};
        \node[vertex] (x) at (1.5, 0.5) {$x$};
        \node[vertex] (z) at (1.5, -2.5) {$z$};
        \node[vertex] (a) at (4, 0.5) {$a$};
        \node[vertex] (b) at (5.5, -1) {$b$};
        \node[vertex] (c) at (4, -2.5) {$c$};
        
        \draw[edge] (y) -- (x) -- (z) -- (y);
        \draw[edge] (a) -- (b) -- (c) -- (a);
        \draw[edge] (x) -- (a);
        \draw[edge] (x) -- (c);
        \draw[edge] (z) -- (c);
        \draw[edge] (y) -- (b);
    \end{tikzpicture}
    \caption{The first candidate for $H$ when $|E(S_1, S_2)| = 4$. Here $S_1 = \{x, y, z\}$ and $S_2 = \{a, b, c\}$.}\label{fig:K3K3-four-edges-case1}
\end{figure}

We first prove that $N(b) \cap N(u) \neq \emptyset$. Assume the contrary, i.e., $N(b) \cap N(u) = \emptyset$. Place the cops as $p_1(C_1)=x$ and $p_1(C_2)=u$. This forces the robber to choose $p_1(R)=b$ to avoid immediate capture. For all subsequent rounds, keep $C_1$ fixed at $x$ while $C_2$ gradually moves along a shortest path in $G$ towards $b$. Since $b$ has no neighbor in $N(u)$, the robber is trapped at $b$ throughout this process and will eventually be captured by $C_2$—a contradiction. Hence, $N(b) \cap N(u)$ is non-empty.

Next, we claim that there exist at least two vertices in $N(u)$ that are adjacent to $b$ and $y$, are **not** adjacent to $x$, and are adjacent to at least one of $\{a, c\}$. Suppose this claim is false. Then, we can choose $p_1(C_1)=x$ and select $p_1(C_2) \in N(b) \cap N(u)$ such that
\[
\bigl(N(u) \cap N(b) \cap N(y) \cap (N(a) \cup N(c))\bigr) \setminus N(x) \subseteq \{p_1(C_2)\}.
\]
Now consider three distinct cop strategies:
\begin{itemize}
    \item Set $p_2(C_1)=x$ and $p_2(C_2)=u$. This forces $p_2(R)=b$, implying $p_1(R) \in N(b)$.
    \item Set $p_2(C_1)=c$ and $p_2(C_2)=u$. This forces $p_2(R)=y$, implying $p_1(R) \in N(y)$.
    \item Set $p_2(C_1)=y$ and $p_2(C_2)=u$. This forces $p_2(R) \in \{a, c\}$, implying $p_1(R) \in N(a) \cup N(c)$.
\end{itemize}
Combining these implications gives $p_1(R) \in \bigl(N(u) \cap N(b) \cap N(y) \cap (N(a) \cup N(c))\bigr) \setminus N(x)$, which forces $p_1(R) = p_1(C_2)$—an immediate contradiction. Therefore, the claim holds.

By a symmetric argument, there also exist at least two vertices in $N(u)$ that are adjacent to $b$ and $y$, are **not** adjacent to $c$, and are adjacent to at least one of $\{x, z\}$. Since $d(b) \le 6$ and $b$ is already adjacent to $\{x, y, a, c\}$, we have $|N(b) \cap N(u)| \le 3$. Consequently, there must exist a vertex $w \in N(u)$ that satisfies **both** of the above conditions simultaneously. For such a $w$, its neighborhood within $H$ is precisely $N(w) \cap H = \{y, z, a, b\}$.

Let
\[
S := \bigl(N(u) \cap N(b) \cap N(y) \cap (N(a) \cup N(c))\bigr) \setminus N(z).
\]
We assert that $|S| \ge 2$, and if equality holds, the two vertices in $S$ are non‑adjacent. Suppose this assertion is false. Then, we can place $p_1(C_1)=z$ and choose $p_1(C_2) \in N(u) \cap N(b)$ such that $S \subseteq N[p_1(C_2)]$.
Under this placement, the robber must choose $p_1(R) \in N(u) \cup \{a\}$. If $p_1(R)=a$, setting $p_2(C_1)=c$ and $p_2(C_2)=u$ captures the robber immediately. Hence, $p_1(R) \in N(u)$. We again examine three cop strategies:
\begin{itemize}
    \item $p_2(C_1)=x$ and $p_2(C_2)=u$ forces $p_2(R)=b$, so $p_1(R) \in N(b)$.
    \item $p_2(C_1)=c$ and $p_2(C_2)=u$ forces $p_2(R)=y$, so $p_1(R) \in N(y)$.
    \item $p_2(C_1)=y$ and $p_2(C_2)=u$ forces $p_2(R) \in \{a, c\}$, so $p_1(R) \in N(a) \cup N(c)$.
\end{itemize}
Together these imply $p_1(R) \in S \subseteq N[p_1(C_2)]$, again leading to a contradiction. Therefore, $|S| \ge 2$ and, if $|S|=2$, the two vertices in $S$ are not adjacent.

Notice that $|N(u) \cap N(b) \cap N(y)| \le 3$ (again by the degree bound on $b$). Since $S \subseteq N(u) \cap N(b) \cap N(y)$ and $|S| \ge 2$, we actually have $|N(u) \cap N(b) \cap N(y)| = 3$(recall that $w\in N(u) \cap N(b) \cap N(y)$). Let these three vertices be $w$, $s$, and $t$, where $\{s, t\} = S$ and $(s, t) \notin E(G)$. By symmetry, $\{s, t\}$ also equals the set $\bigl(N(u) \cap N(b) \cap N(y) \cap (N(x) \cup N(z))\bigr) \setminus N(a)$. Consequently, $N(s) \cap H = N(t) \cap H = \{x, y, b, c\}$.

Finally, observe that the set $\{s, t, a, z\}$ induces an independent set in $G$. Thus, $G[\{s,t,a,z\}] \cong 4K_1$, contradicting the hypothesis that $G$ is $4K_1$-free. This eliminates the first candidate graph.




\subsubsection{The case $|E(S_1,S_2)|=4$, case 2}
The second and final candidate for $H$ when $|E(S_1, S_2)| = 4$ is shown in Figure~\ref{fig:K3K3-four-edges-case2}. It consists of two triangles $S_1 = \{v, x, y\}$ and $S_2 = \{s, t, w\}$, with the four crossing edges $\{(v,s), (v,t), (w,x), (w,y)\}$, plus the additional edges $(v,w)$ and $(x,t)$.

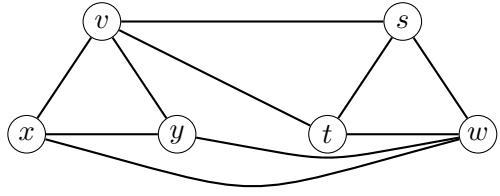
\begin{figure}[htbp]
    \centering
    \begin{tikzpicture}[
        vertex/.style = {draw, circle, inner sep=1.5pt, minimum size=5mm},
        edge/.style = {thick},
        curvededge/.style = {thick, looseness=1.5, bend right=15}
    ]
        \node[vertex] (v) at (0, 0) {$v$};
        \node[vertex] (x) at (-1, -1.5) {$x$};
        \node[vertex] (y) at (1, -1.5) {$y$};
        \node[vertex] (s) at (4, 0) {$s$};
        \node[vertex] (t) at (3, -1.5) {$t$};
        \node[vertex] (w) at (5, -1.5) {$w$};
        
        \draw[edge] (v) -- (x) -- (y) -- (v);
        \draw[edge] (s) -- (t) -- (w) -- (s);
        \draw[edge] (v) -- (s);
        \draw[edge] (v) -- (t);
        \draw[curvededge, bend right=10] (y) to (w);
        \draw[curvededge, bend right=15] (x) to (w);
    \end{tikzpicture}
    \caption{The second candidate for $H$ when $|E(S_1, S_2)| = 4$.}\label{fig:K3K3-four-edges-case2}
\end{figure}

We first show that $N(v) \cap N(u) \neq \emptyset$. Suppose, for a contradiction, that $N(v) \cap N(u) = \emptyset$. Place the cops as $p_1(C_1)=w$ and $p_1(C_2)=u$. This forces the robber to choose $p_1(R)=v$ to avoid capture. Thereafter, keep $C_1$ fixed at $w$ while $C_2$ moves along a shortest path in $G$ towards $v$. Since $v$ has no neighbor in $N(u)$, the robber remains trapped at $v$ and will eventually be captured by $C_2$—a contradiction. Hence, $N(v) \cap N(u)$ is non‑empty.

Next, we establish a lower bound on a specific intersection. Let
\[
T := \bigl(N(u) \cap N(v) \cap (N(x)\cup N(y)) \cap (N(s)\cup N(t)) \bigr) \setminus N(w).
\]
We claim that $|T| \ge 2$. Assume the contrary. Then, we can choose $p_1(C_1)=w$ and select $p_1(C_2) \in N(u) \cap N(v)$ such that $T \subseteq \{p_1(C_2)\}$. Consequently, the robber must choose $p_1(R) \in N(u) \setminus N(w)$. We now examine three different cop strategies:
\begin{itemize}
    \item Set $p_2(C_1)=w$ and $p_2(C_2)=u$. This forces $p_2(R)=v$, implying $p_1(R) \in N(v)$.
    \item Set $p_2(C_1)=s$ and $p_2(C_2)=u$. This forces $p_2(R) \in \{x, y\}$, implying $p_1(R) \in N(x) \cup N(y)$.
    \item Set $p_2(C_1)=x$ and $p_2(C_2)=u$. This forces $p_2(R) \in \{s, t\}$, implying $p_1(R) \in N(s) \cup N(t)$.
\end{itemize}
Combining these implications yields $p_1(R) \in T$, which forces $p_1(R) = p_1(C_2)$—an immediate contradiction. Therefore, $|T| \ge 2$. Since $d(v) \le 6$ and $v$ is already adjacent to $\{x, y, s, t, w\}$, we have $|N(v) \cap N(u)| \le 2$. Hence, $|N(v) \cap N(u)| = 2$ and $T = N(v) \cap N(u)$.

By a symmetric argument (exchanging the roles of $v$ and $w$), we also obtain $|N(w) \cap N(u)| = 2$ and the analogous set $\bigl(N(u) \cap N(w) \cap (N(x)\cup N(y)) \cap (N(s)\cup N(t)) \bigr) \setminus N(v)$ has size $2$. It follows that $N(v) \cap N(w) \cap N(u) = \emptyset$.

Now, choose initial cop positions $p_1(C_1) \in N(u) \cap N(v)$ and $p_1(C_2) \in N(u) \cap N(w)$. We analyze the robber's possible response $p_1(R)$. If $p_1(R) \in \{s, t, x, y\}$, observe that within $H$ we have $N_H[s] = N_H[t]$ and $N_H[x] = N_H[y]$. Therefore, there exists a vertex $a \in \{x, y, s, t\}$ such that $a$ is adjacent to $p_1(C_1)$ and satisfies $N_H[a] = N_H[p_1(R)]$. Setting $p_2(C_1)=a$ and $p_2(C_2)=u$ would then capture the robber—a contradiction. Hence, $p_1(R) \notin \{s, t, x, y\}$, and we must have $p_1(R) \in N(u)$.

Finally, consider two alternative moves for the cops in the next round:
\begin{itemize}
    \item Set $p_2(C_1)=v$ and $p_2(C_2)=u$. This forces $p_2(R)=w$, implying $p_1(R) \in N(w)$.
    \item Set $p_2(C_1)=u$ and $p_2(C_2)=w$. This forces $p_2(R)=v$, implying $p_1(R) \in N(v)$.
\end{itemize}
Together these imply $p_1(R) \in N(v) \cap N(w) \cap N(u)$. However, we previously established that $N(v) \cap N(w) \cap N(u) = \emptyset$—a final contradiction. This eliminates the second candidate graph, completing the analysis for $|E(S_1, S_2)| = 4$.

\subsubsection{The case $|E(S_1,S_2)| \ge 5$}
We now consider the case where there are at least five edges between the two $K_3$ subgraphs. By discussions in \cref{K4+K2}, we can exclude all cases when $H$ contains $K_4+K_2$. We can easily check that if $|E(S_1, S_2)| \ge 5$ and $c(H) \ge 2$, but $H$ does not contain $K_4+K_2$ as a subgraph, then the structure is forced: we must have $|E(S_1, S_2)| = 6$, and $H$ is isomorphic to the graph depicted in Figure~\ref{fig:K3K3-six-edges}. (All configurations with exactly five edges either satisfy $c(H)=1$ or contain a $K_4+K_2$, and are therefore excluded under our assumptions.)

This graph can be described as follows: take two disjoint triangles $S_1 = \{y, x, z\}$ and $S_2 = \{a, b, c\}$. In addition to the six edges within the triangles, the set $E(S_1, S_2)$ consists of precisely those six edges that connect every vertex in $S_1$ to exactly two vertices in $S_2$ (and vice versa).

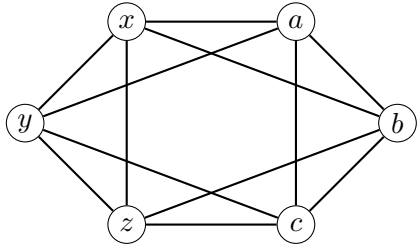
\begin{figure}[htbp]
    \centering
    \begin{tikzpicture}[
        vertex/.style = {draw, circle, inner sep=1.5pt, minimum size=5mm},
        edge/.style = {thick},
        scale=0.9
    ]
        \node[vertex] (y) at (0, -1) {$y$};
        \node[vertex] (x) at (1.5, 0.5) {$x$};
        \node[vertex] (z) at (1.5, -2.5) {$z$};
        \node[vertex] (a) at (4, 0.5) {$a$};
        \node[vertex] (b) at (5.5, -1) {$b$};
        \node[vertex] (c) at (4, -2.5) {$c$};
        
        \draw[edge] (x) -- (y) -- (z) -- (x);
        \draw[edge] (a) -- (b) -- (c) -- (a);
        \draw[edge] (y) -- (a); 
        \draw[edge] (y) -- (c); 
        \draw[edge] (z) -- (c); 
        \draw[edge] (z) -- (b); 
        \draw[edge] (x) -- (a); 
        \draw[edge] (x) -- (b); 
    \end{tikzpicture}
    \caption{The graph $H$ with $|E(S_1, S_2)| = 6$.}\label{fig:K3K3-six-edges}
\end{figure}

We first show that every vertex of $H$ must have a neighbor in $N(u)$. Suppose, for contradiction, that some vertex—say $x$—satisfies $N(x) \cap N(u) = \emptyset$. Then, the following cop strategy guarantees capture: place $p_1(C_1)=c$ and $p_1(C_2)=u$. This forces $p_1(R)=x$. Thereafter, keep $C_1$ fixed at $c$ while $C_2$ moves along a shortest path in $G$ towards $x$. Since $x$ has no neighbor in $N(u)$, the robber is trapped at $x$ and will eventually be captured by $C_2$, contradicting the assumption $c(G) \ge 3$. Hence, $N(x) \cap N(u) \neq \emptyset$, and by symmetry the same holds for every vertex of $H$.

Now, choose $p_1(C_1)=c$ and select $p_1(C_2) \in N(x) \cap N(u)$. The robber must then choose $p_1(R) \in N(u) \setminus N(c)$ to avoid immediate capture. In the next round, we examine five different cop moves, each of which forces the robber into a specific vertex and thereby reveals a neighbor of $p_1(R)$:
\begin{itemize}
    \item $p_2(C_1)=c$ and $p_2(C_2)=u$ force $p_2(R)=x$, hence $p_1(R) \in N(x)$.
    \item $p_2(C_1)=a$ and $p_2(C_2)=u$ force $p_2(R)=z$, hence $p_1(R) \in N(z)$.
    \item $p_2(C_1)=b$ and $p_2(C_2)=u$ force $p_2(R)=y$, hence $p_1(R) \in N(y)$.
    \item $p_2(C_1)=y$ and $p_2(C_2)=u$ force $p_2(R)=b$, hence $p_1(R) \in N(b)$.
    \item $p_2(C_1)=z$ and $p_2(C_2)=u$ force $p_2(R)=a$, hence $p_1(R) \in N(a)$.
\end{itemize}
Combining these implications, we obtain
\[
p_1(R) \in \bigl(N(u) \cap N(x) \cap N(y) \cap N(z) \cap N(a) \cap N(b)\bigr) \setminus N(c).
\]

By a completely symmetric argument (rotating the labels of the six vertices), we conclude that for every subset $F \subseteq V(H)$ of size $5$, there exists a vertex $v_F \in N(u)$ whose neighborhood within $H$ is exactly $F$; i.e., $N(v_F) \cap H = F$.

Hence, $d(x)=4+|N(x)\cap N(u)|\geq 9$, contradicts the degree condition $\Delta(G) \le 6$, completes the proof for the case $|E(S_1,S_2)| \ge 5$.





\subsection{Containing $B_6$}
Finally, suppose $H$ contains $B_6$. From the previous analysis, we may assume that $H$ cannot be covered by two cliques. Under this condition and $c(H) \ge 2$, the graph $H$ must be isomorphic either to $B_6$ itself or to the graph shown in Figure~\ref{fig:B6-variant}. 

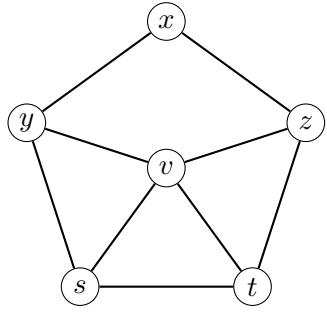
\begin{figure}[htbp]
    \centering
    \begin{tikzpicture}[
        vertex/.style = {draw, circle, inner sep=1.5pt, minimum size=5mm},
        edge/.style = {thick},
        scale=1.3
    ]
        \node[vertex] (v) at (0, 0) {$v$};
        \foreach \angle/\name in {90/x, 162/y, 234/s, 306/t, 18/z} {
            \node[vertex] (\name) at (\angle:1.5) {$\name$};
        }
        
        \draw[edge] (x) -- (y) -- (s) -- (t) -- (z) -- (x);
        
        \foreach \name in {y, s, t, z} {
            \draw[edge] (v) -- (\name);
        }
    \end{tikzpicture}
    \caption{A graph $H$ containing $B_6$, not coverable by two cliques, with $c(H) \ge 2$.}\label{fig:B6-variant}
\end{figure}

Now, we rule out this case. We first show that $N(x) \cap N(u) \neq \emptyset$. If this is false, we could place the cops as $p_1(C_1)=v$ and $p_1(C_2)=u$, forcing the robber to choose $p_1(R)=x$ to avoid immediate capture. Thereafter, keeping $C_1$ fixed at $v$ while $C_2$ moves along a shortest path towards $x$ would trap the robber at $x$ and lead to capture—a contradiction. Hence, $N(x) \cap N(u)$ is non‑empty.

Next, we prove that the set $A := \bigl(N(u) \cap N(x) \cap N(y) \cap N(z)\bigr) \setminus N(v)$ has size at least two. Suppose $|A| \le 1$. Choose $p_1(C_1)=v$ and select $p_1(C_2) \in N(u) \cap N(x)$ such that $A \subseteq \{p_1(C_2)\}$. We then examine three different cop strategies:
\begin{itemize}
    \item Set $p_2(C_1)=v$ and $p_2(C_2)=u$. This forces $p_2(R)=x$, implying $p_1(R) \in N(x)$.
    \item Set $p_2(C_1)=y$ and $p_2(C_2)=u$. Then, $p_2(R) \in \{z, t\}$. If $p_2(R)=t$, moving $p_3(C_1)=v$ and $p_3(C_2)=u$ captures the robber. Thus $p_2(R)=z$, and so $p_1(R) \in N(z)$.
    \item Set $p_2(C_1)=z$ and $p_2(C_2)=u$. Then, $p_2(R) \in \{y, s\}$. If $p_2(R)=s$, moving $p_3(C_1)=v$ and $p_3(C_2)=u$ again captures the robber. Thus $p_2(R)=y$, and so $p_1(R) \in N(y)$.
\end{itemize}
Combining these, we get $p_1(R) \in A$, which forces $p_1(R) = p_1(C_2)$—an immediate contradiction. Therefore, $|A| \ge 2$.

We now establish a second lower bound. Let $B := \bigl(N(u) \cap N(x) \cap N(z)\bigr) \setminus N(y)$. We claim that $|B| \ge 2$. Assume $|B| \le 1$. Place $p_1(C_1)=y$ and choose $p_1(C_2) \in N(z) \cap N(u)$ such that $B \subseteq \{p_1(C_2)\}$.
The robber must then choose $p_1(R) \in N(u)$; if $p_1(R)=t$, setting $p_2(C_1)=v$ and $p_2(C_2)=u$ would capture the robber. Now consider two cop moves:
\begin{itemize}
    \item Set $p_2(C_1)=y$ and $p_2(C_2)=u$. This forces $p_2(R) \in \{z, t\}$. The case $p_2(R)=t$ leads to capture as before, so $p_2(R)=z$, implying $p_1(R) \in N(z)$.
    \item Set $p_2(C_1)=v$ and $p_2(C_2)=u$. This forces $p_2(R)=x$, implying $p_1(R) \in N(x)$.
\end{itemize}
Thus $p_1(R) \in B$, forcing $p_1(R) = p_1(C_2)$, another contradiction. Hence $|B| \ge 2$.

Finally, observe that the sets $A$ and $B$ are both contained in $N(u) \cap N(x) \cap N(z)$ and are disjoint from each other. Consequently,
\[
|N(u) \cap N(x) \cap N(z)| \ge |A| + |B| \ge 2 + 2 = 4.
\]
Therefore,
\[
d(z) \ge |\{x, t, v\}| + |N(u) \cap N(x) \cap N(z)| \ge 3 + 4 = 7,
\]
which contradicts the degree condition $\Delta(G) \le 6$. This completes the proof for the case where $H$ contains $B_6$.

\section{Computer verification of the remaining cases}\label{computation}

It remains to consider the family $\mathcal{F}$ of graphs $G$ on $13$ vertices that satisfy condition ($\ast$) and the following additional constraint:

($\dagger$) For every vertex $v \in V(G)$ with $d(v)=6$, the graph $G - N[v]$ is isomorphic to either $B_6$ or $T_6$.

\subsection{Method}
Our goal is to prove that $\mathcal{F} = \emptyset$. A direct enumeration of all candidates is computationally prohibitive; therefore, we employ an indirect method based on a case analysis.

We first partition $\mathcal{F}$ (with possible overlaps) into five types, defined as follows:
\begin{itemize}
    \item \textbf{Type A}: There exist three vertices $u,v,w$ in $G$ with $d(u)=d(v)=d(w)=6$ and $(u,v),(v,w),(u,w) \notin E(G)$.
    \item \textbf{Type B}: There exist two vertices $u,v$ in $G$ with $d(u)=d(v)=6$, $(u,v) \notin E(G)$, and every vertex $w$ of degree $6$ satisfies $w \in N[u] \cup N[v]$.
    \item \textbf{Type C}: There exist three independent vertices $u,v,w$ in $G$ such that the induced subgraph $H = G - \{u,v,w\}$ satisfies $\Delta(H) \leq 4$.
    \item \textbf{Type D}: There exist three vertices $u,v,w$ in $G$ with $d(u)=d(v)=d(w)=6$ and $(u,v),(v,w),(u,w) \in E(G)$, and the induced subgraph $H = G - \{u,v,w\}$ satisfies $\Delta(H) \leq 5$.
\end{itemize}

\noindent We now show that every graph $G \in \mathcal{F}$ belongs to at least one of these types.

Let $G \in \mathcal{F}$. If there exist two distinct vertices $u, v$ with $d(u)=d(v)=6$ and $(u,v) \in E(G)$, choose a vertex $w \in V(G) \setminus (N[u] \cup N[v])$ having the maximum degree in the subgraph $G - (N[u] \cup N[v])$. If $d(w)=6$, then $G$ is of Type A; if $d(w) < 6$, then $G$ is of Type B.

If no such pair $u,v$ exists, then the set $\{x \in V(G): d(x)=6\}$ forms a clique (possibly empty). We consider its size:
\begin{itemize}
    \item If $|\{x : d(x)=6\}| = 0$, choose an independent set $\{u,v,w\}$ (possible because $c(G) \geq 3$). Since $G$ is $4K_1$-free, the graph $G$ is of type C.
    \item If $|\{x : d(x)=6\}| = 1$, say $d(u)=6$, choose $v,w$ such that $(u,v),(v,w),(u,w) \notin E(G)$ (again by $c(G) \geq 3$). As $G$ is $4K_1$-free, it is of type C.
    \item If $|\{x : d(x)=6\}| = 2$, say $\{u,v\}$. Since $N[u] \nsubseteq N[v]$, pick $w \in N(u) \setminus N[v]$. By $c(G) \geq 3$, there exists $t \in V(G) \setminus (N[v] \cup N[w])$. The independent set $\{v,w,t\}$ dominates $G$, and $|N(u) \setminus \{v,w,t\}| \leq 4$; thus $G$ is of type C.
    \item If $|\{x : d(x)=6\}| \geq 3$, then any three vertices of degree $6$ yield that $G$ is a graph of type D.
\end{itemize}

Having established that types A--D cover $\mathcal{F}$, we prove that each type is empty via computational enumeration.

\paragraph*{Algorithmic framework}
The algorithms for the four types follow a similar three‑step pattern with minor adaptations specific to each type.

\subsubsection*{Type A}
Let $H = G - \{u,v,w\}$, which has $10$ vertices. Since $G$ is $4K_1$-free, we have $\Delta(H) \leq 5$. Moreover,
\begin{align*}
    |E(H)| &= \frac12 \sum_{x \in V(H)} d_H(x) \\
           &\leq \frac12 \sum_{x \in V(H)} \bigl(d_G(x) - |N(x) \cap \{u,v,w\}|\bigr) \\
           &\leq \frac12 \Bigl( \sum_{x \in V(H)} 6 \;-\; d(u)-d(v)-d(w) \Bigr) = 21 .
\end{align*}
For any $x \in V(H)$ with $d_H(x)=5$, we have $d_G(x)=6$ (because $N(x) \cap \{u,v,w\} \neq \emptyset$). By ($\dagger$), $G-N[x]$ is isomorphic to $B_6$ or $T_6$; consequently, $H-N_H[x]$ is obtained from $B_6$ or $T_6$ by deleting two non‑adjacent vertices. Hence $H-N_H[x]$ is isomorphic to one of $K_3+K_1$, $2K_2$, or $P_4$. Therefore, $H$ satisfies:

($\star$) For every $x \in V(H)$ with $d_H(x)=5$, the graph $H-N_H[x]$ is isomorphic to $K_3+K_1$, $2K_2$, or $P_4$.

The algorithm proceeds as follows:
\begin{itemize}
    \item \textbf{Step 1.} Generate all graphs $H$ on $10$ vertices satisfying $|E(H)| \leq 21$, $\Delta(H) \leq 5$, $H$ is $4K_1$-free, and ($\star$) holds. Enumerate them up to isomorphism.
    \item \textbf{Step 2.} Introduce three new vertices $u,v,w$. Construct every $4K_1$-free graph $G$ on the vertex set $\{u,v,w\} \cup V(H)$ such that:
    \begin{itemize}
        \item $d_G(u)=d_G(v)=d_G(w)=6$,
        \item $(u,v),(v,w),(w,u) \notin E(G)$,
        \item $H$ is an induced subgraph of $G$.
    \end{itemize}
    Check whether $G$ satisfies both ($\ast$) and ($\dagger$), except the connectedness condition (because it only rules out a small number of graphs).
    \item \textbf{Step 3.} For each $G$ passing step~2, verify whether $c(G) \leq 2$.
\end{itemize}

\subsubsection*{Type B}
Let $H = G - \{u,v\}$ (so $|V(H)|=11$). Hence $\Delta(H) \leq 5$.
\begin{itemize}
    \item \textbf{Step 1.} Generate all $4K_1$-free graphs $H$ on $11$ vertices with $\Delta(H) \leq 5$, up to isomorphism.
    \item \textbf{Step 2.} Introduce two new vertices $u,v$. Construct every $4K_1$-free graph $G$ on $\{u,v\} \cup V(H)$ such that:
    \begin{itemize}
        \item $d_G(u)=d_G(v)=6$,
        \item $(u,v) \notin E(G)$,
        \item $H$ is an induced subgraph of $G$.
    \end{itemize}
    Check ($\ast$) and ($\dagger$) for $G$, except the connectedness condition.
    \item \textbf{Step 3.} Verify $c(G) \leq 2$ for each admissible $G$.
\end{itemize}

\subsubsection*{Type C}
Let $H = G - \{u,v,w\}$ ($|V(H)|=10$). By definition, $\Delta(H) \leq 4$.
\begin{itemize}
    \item \textbf{Step 1.} Generate all $4K_1$-free graphs $H$ on $10$ vertices with $\Delta(H) \leq 4$, up to isomorphism.
    \item \textbf{Step 2.} Introduce three new vertices $u,v,w$. Construct every $4K_1$-free graph $G$ on $\{u,v,w\} \cup V(H)$ such that:
    \begin{itemize}
        \item $d_G(u),d_G(v),d_G(w) \leq 6$,
        \item $(u,v),(v,w),(w,u) \notin E(G)$,
        \item $H$ is an induced subgraph of $G$.
    \end{itemize}
    Check ($\ast$) and ($\dagger$), except the connectedness condition.
    \item \textbf{Step 3.} Verify $c(G) \leq 2$.
\end{itemize}

\subsubsection*{Type D}
Let $H = G - \{u,v,w\}$ ($|V(H)|=10$). Here $\Delta(H) \leq 5$.
\begin{itemize}
    \item \textbf{Step 1.} Generate all $4K_1$-free graphs $H$ on $10$ vertices with $\Delta(H) \leq 5$, up to isomorphism.
    \item \textbf{Step 2.} Introduce three new vertices $u,v,w$. Construct every $4K_1$-free graph $G$ on $\{u,v,w\} \cup V(H)$ such that:
    \begin{itemize}
        \item $d_G(u)=d_G(v)=d_G(w)=6$,
        \item $(u,v),(v,w),(w,u) \in E(G)$,
        \item $H$ is an induced subgraph of $G$.
    \end{itemize}
    Check ($\ast$) and ($\dagger$).
    \item \textbf{Step 3.} Verify $c(G) \leq 2$.
\end{itemize}

\subsection{Computation results}

We present here selected data obtained from our computational enumeration and verification algorithms. For detail codes, see \url{https://github.com/trainboy1024/Codes-to-upload} . To achieve high computational efficiency while leveraging advanced graph libraries, we adopted a hybrid approach. The core enumeration framework was implemented in C++ for performance; and this framework interfaced with SageMath, utilizing its powerful graph theory libraries, including nauty \cite{mckay2014practical}. For the step of determining the cop number of a given graph, we directly utilized the function copnumberver() from the publicly available code of \cite{char2025counterexample}.

Type A: $|\{H\in\mathsf{Graph}(10)\cap\mathsf{Forb}(4K_1)\colon |E(H)|\leq 21, \Delta(H)\leq 5,(\star)\text{ holds for $H$}\}|=14764$; A total of \(322\) graphs \(G\) were verified under this type.

Type B: $|\{H\in\mathsf{Graph}(11)\cap\mathsf{Forb}(4K_1)\colon \Delta(H)\leq 5\}|=95420$; A total of \(8205\) graphs \(G\) were verified under this type.

Type C: $|\{H\in\mathsf{Graph}(10)\cap\mathsf{Forb}(4K_1)\colon \Delta(H)\leq 4\}|=783$; A total of \(361217\) graphs \(G\) were verified under this type.

Type D: $|\{H\in\mathsf{Graph}(10)\cap\mathsf{Forb}(4K_1)\colon \Delta(H)\leq 5\}|=66107$; A total of \(26729\) graphs \(G\) were verified under this type.

The reported graph counts are up to simple symmetries (e.g., permuting vertices \(u, v, w\)) but do \emph{not} perform a full isomorphism reduction; thus, graphs that are isomorphic but differ beyond these basic symmetries are counted separately.

\section*{Acknowledgments}

The author is grateful to the School of Mathematics and Statistics of Northeastern University at Qinhuangdao, as well as the peers who provided help throughout the whole process. The author also thanks his parents and family for their support, which has allowed him to devote himself to research without distraction.

\bibliographystyle{plain}
\bibliography{PAPER}

\end{document}